\documentclass[12pt]{article} 
\usepackage{amssymb,epsfig,amsmath,amsthm,cite}

\usepackage[dvipsnames]{xcolor}

\newcommand{\trieq}{\stackrel{\triangle}{=}}

\newcommand{\cF}{{\mathcal F}}

\newcommand{\E}{{\mathbb E}}
\newcommand{\PP}{{\mathbb P}}
\newcommand{\R}{{\hbox{I{\kern -0.22em}R}}}
\newcommand{\sR}{\rm I{\kern -0.22em}R}
\newcommand{\id}{{\mathbf 1}}

\newcommand{\bB}{{\mathbb B}}
\newcommand{\bM}{{\mathbb D}}
\newcommand{\bS}{{\mathbb S}}

\newtheorem{MyDef}{Definition}[section]
\newtheorem{theorem}{Theorem}[section]
\newtheorem{thrm}{Theorem}[section]
\newtheorem{prop}{Proposition}[section]
\newtheorem{exam}{\sc Example} [section]
\newtheorem{Ex}{\sc Example} [section]
\newtheorem{assump}{Assumption}[section]
\newtheorem{re}{Remark}[section]
\newtheorem{coro}{Corollary}[section]
\newtheorem{lemma}{Lemma}[section]

\providecommand{\keywords}[1]
{
  \small	
  \textbf{\textit{Keywords---}} #1
}

\title{General Optimal Stopping without Time Consistency}
\author{Hanqing Jin and Yanzhao Yang\thanks{Mathematical Institute and Oxford-Octa Laboratory in Digital Economics, the University of Oxford, Woodstock Road, Andrew Wiles Building, Oxford OX2 6GG, UK. Email:
$<$jinh,yanzhao.yang@maths.ox.ac.uk$>$.}}

\begin{document}


\maketitle

\begin{abstract}
In this paper, we propose a new framework for solving  a general dynamic optimal stopping problem 
without time consistency. A  sophisticated solution is proposed and  is well-defined for any time setting with general flows of objectives. A backward iteration is proposed to find the solution. The iteration works  with an additional condition, 
which holds in interesting cases including the time inconsistency arising from  non-exponential discounting.  Even if the iteration does not work, the equilibrium solution can still be studied by a forward definition. 
\end{abstract}

\keywords{Time Inconsistency. Optimal Stopping. Sophisticated Solution}

Mathematics Subject Classification 60G40.91B06  

JEL classification D81.D90.C62

\section{Introduction}

As a branch of applied probability, optimal stopping problems have been
widely studied  and applied in different areas such as statistics, 
economics, and finance.   In general,  an  optimal stopping problem is to 
maximise some objective in a 
{\it dynamic system} by choosing a proper {\it stopping time}.   A dynamic 
system is modelled by its state process  $X_t$ together with an information 
flow described by a filtration $\cF_t$ (to which $X_\cdot$ is adapted). Starting from time 
$t=0$,  
the optimal stopping problem is to find an 
$\cF_t$-stopping time $\tau$ to maximise the interest of an agent with objective function $J(\tau)$.

 Most related research in literature is on continuous time optimal stopping, where 
 the dynamic system is modelled by a Markovian diffusion process 
 $$dX_t=\mu(t, X_t)dt+\sigma(t, X_t)dB_t$$
 with a standard Brownian motion $B$ and some deterministic functions $\mu$ and $\sigma$, 
 and the information flow is the observation of the state process $X$ or that of the Brownian motion $B$, 
 i.e. $\cF_t=\sigma(X_s: s\le t)$ or $\cF_t=\sigma(B_s: s\le t)$.  The  objective to maximise is often set in the form 
 $J(\tau)=\E[ f( X_\tau)]$ with some real function $f(\cdot)$, and hence the optimal stopping problem is formulated as
\begin{equation}\label{classtop}
 \max_{\tau\in \Gamma(0,T)}\E[ f(X_\tau)],
 \end{equation}
where $\Gamma(0,T)\trieq\{\mbox{ stopping time  } \tau : 0\le \tau\le T\}$\footnote{ When $T=+\infty$,
$\Gamma(0,+\infty)\trieq\{\mbox{ stopping time  }\tau:  \tau\in [0,+\infty)\}$.}
\footnote{The form $\E [f(X_\tau, \tau)]$ is more 
 popular, but this is a special case of $\E [f(X_\tau)]$ taking the time 
$t$ as an additional dimension of  the state process.}. Exceptions include  Toit  and Peskir \cite{peskir-toit},  
 Shiryaev, Xu and Zhou \cite{shiryaev2008thou}
 \footnote{The problems in these two papers 
 can be  transformed into standard ones as above.} and Xu and Zhou \cite{xu2013optimal}.   
 
 To study the optimal stopping problem as above, dynamic programming is the most often used.
In this theory, the optimisation at time $t=0$ is expanded into a flow of optimisation problems 
starting from any time $t<T$ with the objective $J(\tau; t,x )=\E[f(X_\tau)|X_t=x]$. With this expansion,
the dynamic principle holds, which reads ``an optimal stopping time picked up at time $t=0$
is also optimal at any future time $t$ before the arrival of the optimal stopping time''.   We interpret this principle as the time consistency of the preference. 

In a general dynamic stopping problem, due to the evolution of the environment and the player's taste, 
the objective can change in some inconsistent style, i.e. the dynamic principle may fail.  
Inconsistent change in objectives  is common in dynamic decision making, as shown in the following examples from daily life. 
\begin{exam}\label{smoking}
 Smokers often try to quit smoking for some health concern. The choice of  quitting time 
 can be regarded as an optimal stopping problem. In reality, a common optimal plan is in 
 the form like  ``to quit tomorrow'', which shows the smoker will keep changing the optimal 
 stopping time.  
\end{exam}

\begin{exam}\label{stocksale}
When investors holding some shares plan to sell them in the future, they often set a
return target, like 20\%, and sell the shares when the target is achieved. Sometime they
also set a stop-loss level in terms of return rate. But many investor will keep the return target
when time goes on, i.e. they will ask for more profit when the share price goes up.  
\end{exam}

If we don't ignore the time inconsistency, then we should take our future preferences (and hence the resulted decisions)
into account in making the current decision. Without  time consistency,  the optimal solution obtained at time $t=0$ may not be optimal in the future, hence the concept of classical optimal solution does not make sense in general, and we need to re-define 
the solution to replace the concept of optimal solution. 

For optimal stopping without time consistency, the 
so-called naive policy makes decision as follows.
At any time $t$, the decision of optimal stopping is the comparison of immediate stopping and waiting for future decisions. 
The player should stop immediately if $J(t;t)>J(\tau;t)$ for any stopping time $\tau>t$, and 
wait otherwise. 
So the naive solution can be defined as the first time that immediate stopping is optimal in this sense. 

 
Economists has studied 
time inconsistency in dynamic optimal control. Strotz \cite{strotz1955myopia} studied the optimal cumulated utility with non-exponential discounting, and proposed to deal with time inconsistency by replacing the non-exponential discounting with an exponential one. Pollak \cite{pollak1968consistent} proposed a backward induction to 
get a consistent solution for a discretised version of  the problem in \cite{strotz1955myopia}, and suggested making the discretisation finer and finer to converge to a continuous time solution. Peleg and Yaari \cite{peleg1973existence} worked further on the same problem, and propose the equilibrium solution for the problem. 
More recently, Grenadier and Wang \cite{Grenadier2006InvestmentUU} studied the equilibrium behaviour for investment under time inconsistency. 
For optimal stopping problem, O'Donoghue and Rabin \cite{Donoghue-rabin} proposed a solution for a general stopping problem without time consistency in a finite discrete-time setting, which respects the idea of the equilibrium solution in \cite{peleg1973existence}.
  
In a finite discrete-time setting with time spots $t\in\{0,1,\cdots, n\}$,  we reformulate the definition of the solution to our stopping problem  in \cite{Donoghue-rabin} as follows. 
\begin{itemize}
\item [(1)] At time $t=n$, $\tau_n^b=n$.
\item [(2)] At any time $t<n$, 
$$\tau_t^b=\left\{\begin{array}{cc} t & \mbox{ if } J(t;t)> J(\tau_{t+1}^b; t)\\ \tau^b_{t+1}& \mbox{otherwise}\end{array}\right. .\footnote{When $J(t;t)=J(\tau^b_{t+1};t)$ happens, the solution $\tau^b_t$ can be defined as either $t$ or $\tau^b_{t+1}$, 
the former refers  to the minimal solution, and the latter refers to the maximal solution. In this paper, we use the maximal version.}$$ 
\item [(3)] $\tau^b_t$ is defined as the sophisticated stopping time at time $t$, which we call the {\it backward induction} solution (BIS in short) in this paper. 
 \end{itemize}
 The sophisticated stopping time takes into account the future preferences at every time $t$, 
 which sounds more rational.
 It works perfectly in a finite discrete-time system.
 When the set of time spots is infinitely discrete or continuous, or even a hybrid of discrete and continuous, there is no good definition of a proper solution accepted widely in the literature. 

There are different ideas to re-define the solution for the time inconsistent optimal stopping problem in a continuous time setting with specific form of objectives. 
Ebert et al \cite{ebert-wei} borrowed the concept of equilibrium control from the dynamic control problem without time consistency, and formulated the optimal stopping problem as a special optimal control problem with the control space consisting of $0$ and $1$. 
 Christensen and Lindensj\"{o} \cite{Christensen} studied the stopping problem with state dependent preferences. They defined the equilibrium by a direct perturbation on the candidate of stopping time, and tried to find the equilibrium by a backward iteration.  

With the BIS 
for finite discrete time setting, researchers also proposed to tackle the continuous-time problem by discretising the time interval, getting the backward induction solutions, and taking their limit as the equilibrium solution.  It was first employed by Pollak \cite{pollak1968consistent} to study the consumption problem of Strotz \cite{strotz1955myopia} and has recently been revisited and extensively studied by a series of papers, for instance, Yong \cite{Yong2011DeterministicTO} and Wang and Yong \cite{Wang2019TimeinconsistentSO}. 
For this approach, it is still unknown that whether it is possible to establish the convergence of limiting equilibrium in a partition-free manner.

Recently,  Huang and Nguyen-Huu \cite{huang2018time} and Huang  et al \cite{huang2020general} 
 introduced a new definition of equilibrium solution by iteration. 
Some  iterative procedures were introduced to capture how the future decision making should be taken into account at the current time. 
They solved some problems without  time consistency, mainly  in the class of  Markovian system and non-exponential discounting reward. They also left some open questions on the general existence of a solution.

With time consistency, an optimal solution to an optimal stopping problem is clearly defined without any structure of the time setting or a specific form of the preference. We believe that  there should also be a structure-free solution to the problem without time consistency. 

In this paper, we propose a structure-free solution for a general stopping problem, and propose an iterative procedure to search for the solution. Our solution is different from the one 
proposed by Huang and Nguyen-Huu \cite{huang2018time}  and has some connection with the one proposed by 
\cite{Christensen}.  

This paper will be presented as follows. 

In Section \ref{sec2}, we review the classical equilibrium strategy BIS defined above, 
and prove some properties of this classical equilibrium.

In Section \ref{sec3}, we propose our general framework which is referred as Sophisticated Stopping Strategy and we propose a meaningful backward iteration to approach our equilibrium, and suggest a sufficient condition making the iteration work. 

In Section \ref{sec6}, we provide a counter example for our backward iteration proposed in Section \ref{sec3}, and show that it can be endless without more structure of the problem. We also prove that the backward iteration works for a general optimal stopping problem with non-exponential discounting. 

In Section \ref{sec5}, we show by counter an example that our equilibrium does not respect the classical one proposed by Strotz, and we find that our equilibrium satisfies some monotonicity with respect to the time horizon of the problem, while the classical equilibrium does not. 

Finally we conclude the paper in Section \ref{sec8}.

\section{Problem Formulation and Finite Discrete Time Setting}\label{sec2}
\newcommand{\TS}{{\mathbb T}}
\newcommand{\ST}{{\mathcal T}}

In the whole paper, we study a dynamic optimal stopping problem in a probability space $(\Omega,\mathcal{F},\mathbb{P})$ over a set of time points $\TS$ equipped with filtration 
$\{\cF_t: t\in \TS\}$.  Denote  by $t=0$ the starting time and assume $0\in \TS$. Denote $T$ as the terminal time which can be finite or infinite. 
For any $\rho\in \ST(0)$, we can define 
$$\ST(\rho):=\{\tau:  \tau \mbox{ is a stopping time, } \tau\ge \rho\}.$$

At any time $t\in \TS$, we aim to maximize the objective $J(\tau; t)$ over $\tau\in \ST(t)$,  where $J(\cdot; t)$ is a mapping from $\ST(t)$ to the set of $\cF_t$-measurable random variables $L^0(\Omega, \cF_t, \R)$, which satisfies\footnote{In literature  on general optimal stopping problem,  another mild condition has been proposed on the preference $J$, which reads: 
$\forall\, \tau_{1},\tau_2 \in \ST(t)$, $A \in \mathcal{F}_t$, $J(\tau_1\mathbb{I}_{A}+\tau_2\mathbb{I}_{A^{c}};t)=J(\tau_1;t)\mathbb{I}_{A}+J(\tau_2;t)\mathbb{I}_{A^c}$. In our general theorem in Section \ref{sec3}, we don't need this condition. 
}: 
\begin{itemize}
    \item [(i)] $J(\tau; t)$ is $\mathcal{F}_t$ measurable for any $\tau\in \ST(t)$ ;
    \item [(ii)] $J(\tau_1;t)=J(\tau_2;t)$ if $\tau_1=\tau_2$ a.s.;
\end{itemize}

In most of the literature on optimal stopping, there is a state process $X$ to indicate the system, where the optimization is studied.  
In this paper, we aim to define a sophisticated solution to the time inconsistent problem, and we don't plan to deal with explicit form of the solution, so we don't need to emphasize the role of the state process, and all information from the state process will be contained in the filtration $\cF$. 

Let us take the finite discrete time setting as an example, where  $\TS=\{0,1,2,\cdots, T\}$ for some integer $T$. For general filtration $\{\cF_t: t\in \TS\}$ and preference flow $\{J(\cdot; t): t\in\TS\}$, we have defined the ``sophisticated optimal'' solution  $\tau^b$ in the last section  by a sequence of stopping times $\{\tau^b_t: t\in \TS\}$. In our section of Introduction, we call it the ``backward induction solution'' (BIS).  

The BIS is well-defined in the finite discrete time setting, but the definition is not very convenient to extend to a general setting. 
We will re-define it by iteration, which can shed a light on the generalization. To this end, 
we define the sequence of stopping times $\{\tau_k^{be}: k=0,\cdots, T\}$ 
as follows. 
\begin{MyDef}
\begin{itemize}
\item [(i)] $\tau^{be}_T=T$;
\item [(ii)] For any $t<T$, $\tau^{be}_t=\inf\{k\ge t: J(k;k)> J(\tau^{be}_{k+1};k)\}\wedge \tau^{be}_{t+1}$;
\end{itemize}
\end{MyDef}
Obviously, 
$\tau^{be}_t$ increases in $t$. 
We claim that 
\begin{theorem}
$\tau^{be}_k=\tau^b_k$.
\end{theorem}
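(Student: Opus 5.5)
The plan is a backward induction on $t$ from $T$ down to $0$. The key observation is that the infimum in the definition of $\tau^{be}_t$ splits into its first term ($k=t$) and the infimum over $k\ge t+1$, and the latter is already absorbed into $\tau^{be}_{t+1}$.

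Throughout I use $\inf\emptyset=+\infty$. In the infimum, $k$ ranges over $\{t,\dots,T-1\}$, so that $\tau^{be}_{k+1}$ is defined. For $t\le T-1$ write
$$\sigma_t:=\inf\{k\ge t:\ J(k;k)>J(\tau^{be}_{k+1};k)\},\qquad\text{so that}\qquad \tau^{be}_t=\sigma_t\wedge\tau^{be}_{t+1},$$
and set $\sigma_T:=+\infty$, which is consistent with $\tau^{be}_T=T$. The elementary identity driving the argument is
$$\sigma_t=t \ \text{ on } A_t,\qquad \sigma_t=\sigma_{t+1} \ \text{ on } A_t^c,\qquad\text{where } A_t:=\{J(t;t)>J(\tau^{be}_{t+1};t)\}.$$
Since $\tau^{be}_{t+1}=\sigma_{t+1}\wedge\tau^{be}_{t+2}$ (for $t+1\le T-1$), we get $\tau^{be}_{t+1}\le\sigma_{t+1}$. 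When $t+1=T$ this holds trivially, because $\sigma_T=+\infty$.

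Base case: $\tau^{be}_T=T=\tau^b_T$. Inductive step: assume $\tau^{be}_{s}=\tau^b_s$ a.s.\ for all $s\in\{t+1,\dots,T\}$. By property (ii) of $J$, $J(\tau^{be}_{t+1};t)=J(\tau^b_{t+1};t)$ a.s. Hence $A_t$ coincides a.s.\ with the event $\{J(t;t)>J(\tau^b_{t+1};t)\}$ used in the definition of $\tau^b_t$.

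On $A_t$ we have $\sigma_t=t\le\tau^{be}_{t+1}$, so $\tau^{be}_t=t=\tau^b_t$. On $A_t^c$ we have $\tau^{be}_t=\sigma_{t+1}\wedge\tau^{be}_{t+1}=\tau^{be}_{t+1}$, because $\tau^{be}_{t+1}\le\sigma_{t+1}$. By the induction hypothesis this equals $\tau^b_{t+1}=\tau^b_t$. Thus $\tau^{be}_t=\tau^b_t$ a.s., which closes the induction.

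Two small points need care; the main one is keeping the induction hypothesis strong enough. First, the comparison events in $\tau^{be}$ use $\tau^{be}_{k+1}$ for every $k\ge t$, not only for $k=t$. This is why I induct on all later indices simultaneously, and why the inequality $\tau^{be}_{t+1}\le\sigma_{t+1}$ is read directly off the definition rather than from the comparison with $\tau^b$. Second, the statement implicitly requires each $\tau^{be}_t$ to be a stopping time, so that $J(\tau^{be}_{k+1};k)$ makes sense. I would verify this along the way: each $A_k\in\cF_k$ by property (i), so $\{\sigma_t\le m\}=\bigcup_{k=t}^{m}A_k\in\cF_m$, and minima of stopping times are stopping times. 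Everything else is routine case bookkeeping.
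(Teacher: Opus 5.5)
Your proof is correct and follows the same backward induction as the paper. You split on whether immediate stopping at $t$ beats $\tau^{b}_{t+1}$, and on the complementary event you use that the infimum over $k\ge t+1$ is already dominated by $\tau^{be}_{t+1}$, which the paper uses implicitly in its last display; your version is also more careful, doing the case split event-wise, invoking property (ii) of $J$ to pass between $\tau^{be}_{t+1}$ and $\tau^b_{t+1}$, and checking that each $\tau^{be}_t$ is a stopping time.
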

\begin{proof}
We prove this by induction. 
The statement is trivially true for $k=T$ and $k=T-1$.
Suppose that the statement holds for all $k\ge  n$ for some $n<T$, it suffices to show that the statement  also holds for $k=n-1$.

At $k=n-1$, by the new definition we know 
\begin{equation}
  \tau^{be}_{n-1}=\inf\{i\geq n-1:J(i;i)> J(\tau^{b}_{i+1};i )\}\wedge \tau^{be}_n  
\end{equation}
If
    $J(n-1;n-1)> J(\tau^b_{n};n-1),$
then it is true that
$\tau^{be}_{n-1}=n-1=\tau^b_{n-1};$
 otherwise, we have 
     $J(n-1;n-1)\le J(\tau^b_{n};n-1),$
 which implies  $\tau^b_{n-1}=\tau^b_n$, hence 
 \begin{eqnarray*}
   \tau^{be}_{n-1}&=&\inf\{i\geq n-1: J(i;i)> J(\tau^{be}_{i+1};i)\} \wedge \tau^{be}_n\\
   &=&\inf\{i\geq n: J(i;i)> J(\tau^{be}_{i+1};i)\} \wedge \tau^{be}_n\\
   &=&\tau^{be}_n=\tau^b_n,
 \end{eqnarray*}
  and then $\tau^{be}_{n-1}=\tau^b_{n-1}$ also holds. 
\end{proof}

The equivalent definition $\tau^{be}_t$ reveals that BIS $\tau^b_t$ is the first time after $t$ that immediate stopping is better than stopping at $\tau^b_{t+1}$, where $\tau^{b}_{t+1}$ is  the consequent stopping time of not stopping at time $t$. 
 
The equivalent definition also reveals the following property of BIS. 
 \begin{theorem}\label{bisprop2}
For any $t<T$, any $s$ with $t\le s< \tau^b_t$, we have 
$$J(\tau^b_t; s)\ge J(s;s). $$
\end{theorem}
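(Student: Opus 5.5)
The plan is to exploit two facts about the backward induction solution. The first is the recursive definition of $\tau^b$ (equivalently, the characterisation via $\tau^{be}$). The second is a key identity: on the event $\{s<\tau^b_t\}$, the continuation stopping times agree, $\tau^b_{s+1}=\tau^b_t$. Since $\tau^b_t$ is a stopping time and $s$ is deterministic, all statements should be read on the event $\{s<\tau^b_t\}\in\cF_s$. Accordingly, I would interpret the claim as $J(\tau^b_t;s)\ge J(s;s)$ on $\{t\le s<\tau^b_t\}$.

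The first step is a ``flow'' property. For $t\le s\le r$, on $\{s<\tau^b_t\}$ we have $\tau^b_t=\tau^b_{s}=\tau^b_{s+1}$. I would prove this by induction on $s$ from $t$ upward. By definition (2), on $\{\tau^b_t\ne t\}$ we have $\tau^b_t=\tau^b_{t+1}$. So on $\{t<\tau^b_t\}$ we get $\tau^b_t=\tau^b_{t+1}$. If moreover $t+1<\tau^b_t=\tau^b_{t+1}$, we iterate. In general, on $\{s<\tau^b_t\}$ we have $\tau^b_t=\tau^b_s$, and since $\tau^b_s>s$, the definition forces $\tau^b_s=\tau^b_{s+1}$. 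This is more cleanly read off the $\tau^{be}$ formula. There, $\tau^{be}_t=\inf\{k\ge t: J(k;k)>J(\tau^{be}_{k+1};k)\}\wedge\tau^{be}_{t+1}$. Combined with the earlier theorem $\tau^{be}=\tau^b$ and the monotonicity of $\tau^{be}_t$ in $t$, this shows that $\tau^b_t$ agrees with $\tau^b_s$ before it stops.

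The second step uses the stopping rule at time $s$. On $\{s<\tau^b_t\}=\{s<\tau^b_s\}\cap\{s<\tau^b_t\}$, the agent did not stop at $s$. By the maximal convention in definition (2), this means $J(s;s)\le J(\tau^b_{s+1};s)$. Here the event $\{\tau^b_s\neq s\}$ is exactly where $J(s;s)\le J(\tau^b_{s+1};s)$, because the definition is pointwise in $\omega$, the comparison being $\cF_s$-measurable.

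The third step replaces $\tau^b_{s+1}$ by $\tau^b_t$ inside $J(\cdot;s)$. This is the main obstacle. The two stopping times coincide only on $\{s<\tau^b_t\}$, not almost surely. Condition (ii) only gives equality of $J$ for a.s.\ equal arguments, and the paper explicitly does not assume the local property $J(\tau_1\mathbb I_A+\tau_2\mathbb I_{A^c};s)=J(\tau_1;s)\mathbb I_A+J(\tau_2;s)\mathbb I_{A^c}$. I would try to circumvent this by showing the equality globally after modification. On $\{\tau^b_t\le s\}$ the inequality is vacuous. So the first attempt is to read the statement as conditioned on $s<\tau^b_t$, in the spirit of the paper's footnote on the locality condition. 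For the rigorous version, I would note that in the finite discrete setting with deterministic $s$ one can simply invoke that locality property on $A=\{s<\tau^b_t\}\in\cF_s$. Alternatively, if the statement is meant for deterministic-event cases where $\{s<\tau^b_t\}$ has full measure, then $\tau^b_{s+1}=\tau^b_t$ a.s.\ and condition (ii) suffices. Combining this with the second step yields $J(\tau^b_t;s)=J(\tau^b_{s+1};s)\ge J(s;s)$.
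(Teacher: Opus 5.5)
Your proposal follows the paper's proof. On $\{s<\tau^b_t\}$ you get $\tau^b_t=\tau^b_s=\tau^b_{s+1}$; the paper gets this from monotonicity of $\tau^b$ in the index and $\tau^b_{\tau^b_t}=\tau^b_t$, while you use a direct induction. Not stopping at $s$ then gives $J(s;s)\le J(\tau^b_{s+1};s)$, which is rewritten as $J(s;s)\le J(\tau^b_t;s)$.

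The substitution issue you flag is real, and the paper passes over it silently. Since $J(\cdot;s)$ on $\{s<\tau^b_t\}$ may depend on $\tau^b_t$ off that event, the event-wise claim can fail under (i)--(ii) alone. So the locality property is a genuine extra hypothesis, not something the finite discrete setting supplies for free. Under the reading $s<\tau^b_t$ a.s., condition (ii) suffices, exactly as you say.
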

\begin{proof}
By the definition of $\tau^b_t$, we know that $\tau^b_\nu\le \tau^b_t$ for any $\nu\le t$. 
Furthermore, at time $\tau^b_t$, we should stop immediately, so $\tau^b_{\tau^b_t}=\tau^b_t$. 

For any $s$ with $t\le s<\tau^b_t$, we have 
$s<\tau^b_t\le \tau^b_s\le \tau^b_{\tau^b_t}=\tau^b_t$, hence $J(s;s)\le J(\tau^b_{s+1}; s)$ and 
$\tau^b_t=\tau^b_s=\tau^b_{s+1}$, 
which implies $J(s;s)\le J(\tau^b_s;s).$
\end{proof}

With Theorem \ref{bisprop2}, we know that, starting from time $t$, we should not stop before the solution $\tau^b_t$, which 
is planned to be the final stopping time in the future. We call a stopping time $\tau$ ``approachable''\footnote{Here we only give a vague description of approachability. There are different accurate definitions of approachability, including the equilibrium defined in Huang and Nguyen-Huu \cite{huang2018time} and the one we will define in Section \ref{sec3}.}  by the property that we should not stop at any time before $\tau$. In this sense, $\tau^b_t$ is approachable from time $t$. 
Generally,  $\tau^b_t$ is not the only approachable stopping time from $t$. In fact, from time $t$, the smallest approachable stopping time from $t$ is  $t$ itself. 
Is it possible that $\tau^b_t$ the biggest approachable stopping time? In Section \ref{sec3}, we will give a detailed definition of approachability, and show by an example in Section \ref{sec5} that $\tau^b_t$ is not always the biggest approachable stopping time. 

BIS works only in the finite discrete-time setting. In the rest of this paper, we will study the solution to the optimal stopping time in a general time setting by borrowing ideas and properties of BIS.  Notice that the equivalent definition $\tau^{be}_t$ sounds easier to extend. However, in the definition of $\tau^{be}_t$, at any time $k$ before the next stopping time $\tau^{be}_{t+1}$, we only compare immediate stopping against 
stopping at $\tau^{be}_{k+1}$, while stopping times between $k$ and $\tau^{be}_{k+1}$ are ignored, which sounds not 
natural in terms of optimisation over all stopping times. 
We are going to revise it, and include all stopping times in between into the comparison, which will make the final solution different, and more natural to us.

\section{Sophisticated Stopping in General Time Setting}\label{sec3}
Denote $T=\sup\{t: t\in \TS\}$, with which $T=+\infty$ when $\TS$ is unbounded. 

For a general optimal stopping problem, the decision at any time $t$ is 
either immediate stopping or waiting for decisions in the future. Put decisions at different time together,   the solution  stopping time should be the first time that immediate stopping is the best. 
This interpretation of a solution applies to any time setting and regardless the time consistency of the preferences. It contains two requirements on the optimal solution $\tau_*$: 
\begin{itemize}
\item [(i)] We should not stop at any time $t$ before $\tau_*$. 
\item [(ii)] For any stopping time $\tilde \tau$ strictly later than $\tau_*$, there exists some stopping time $\bar \tau$
with $\tau_*\le \bar\tau<\tilde\tau$, such that standing at time $\bar \tau$,  immediate stopping is optimal over all other stopping times before $\tilde \tau$. 
\end{itemize}

We need to describe these two requirements more accurately, and hopefully they together define a unique solution, which we name the {\it sophisticated solution} or {\it equilibrium solution}.

We interpret the first requirement above by the concept of approachability, and the second requirement by delimiting. 
We introduce the following mapping from $\ST(0)$ to itself.

\begin{MyDef}\label{stopping time mapping}
For $\rho \in \ST(0)$, define the mapping $F$ by 
 $$F(\rho)=\inf\{t<\rho: J(t;t)> J(\tau;t),\forall \rho \geq \tau>t\}\wedge \rho.$$
\end{MyDef}

Roughly speaking, $F(\rho)$ is the first time $t$ before $\rho$ that immediate stopping is strictly better than  all other stopping time before $\rho$. Mathematically, to ensure that $F(\rho)$ is a stopping time, we need the filtration to be right continuous.

\begin{lemma}\label{cont-J-0}
If the filtration $\{\mathcal{F}_{t}\}_{t\in \TS}$ is right continuous, then $F(\rho)$ is a stopping time for  $\forall \rho\in \ST(0)$.  
\end{lemma}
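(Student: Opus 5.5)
Set $\sigma:=\inf A\wedge\rho$, where
$$A(\omega):=\{t\in\TS: t<\rho(\omega),\ J(t;t)(\omega)>J(\tau;t)(\omega)\ \forall \tau\in\ST(t),\ t<\tau\le\rho\}.$$
By definition $F(\rho)=\sigma$, and we must show $\{\sigma\le s\}\in\cF_s$ for every $s\in\TS$. By right continuity it suffices to show $\{\sigma< s\}\in\cF_s$ for all $s$, or equivalently that $\{\sigma\le s\}\in\cF_{s+\epsilon}$ for all $\epsilon>0$. We then use $\cF_s=\bigcap_{\epsilon}\cF_{s+\epsilon}$. At isolated points of $\TS$ the condition is checked directly.

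First decompose
$$\{\sigma\le s\}=\{\rho\le s\}\cup\{\rho>s,\ \inf A\le s\}.$$
The first set lies in $\cF_s$ because $\rho$ is a stopping time.

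Next fix $t\le s$ and consider the event $E_t:=\{t\in A\}\cap\{\rho>s\}$. On $\{\rho>t\}$, the requirement that $J(t;t)>J(\tau;t)$ for all $t<\tau\le\rho$ should be read as an $\cF_t$-measurable event. Each $J(\tau;t)$ is $\cF_t$-measurable by assumption (i). The quantifier over uncountably many $\tau$ I would handle through an essential supremum. Let
$$V_t:=\operatorname{ess\,sup}\{J(\tau;t):\tau\in\ST(t),\ t<\tau\le\rho \text{ on }\{\rho>t\}\},$$
which is $\cF_t$-measurable. Then interpret $\{t\in A\}$ as $\{J(t;t)>V_t\}\cap\{\rho>t\}\in\cF_t$. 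This interpretation, with sets defined up to null sets and a complete filtration, is the point where I would have to commit to a convention. The notion "$\forall\tau$" in Definition~\ref{stopping time mapping} must be understood as a.s. comparison of $\cF_t$-measurable random variables. I would state this explicitly, together with the usual conditions (completeness plus right continuity).

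The main obstacle is the uncountable union over $t\le s$ in $\{\inf A\le s\}$. I would pass to the regularity of the infimum. For $\epsilon>0$, write
$$\{\inf A<s+\epsilon\}=\bigcup_{t<s+\epsilon}\{t\in A\}.$$
Each set in this union lies in $\cF_t\subset\cF_{s+\epsilon}$. Hence the union is in $\cF_{s+\epsilon}$ provided we can reduce it to a countable union, or use the fact that an arbitrary union of sets each lying in $\cF_{s+\epsilon}$ is the essential union, which is again in $\cF_{s+\epsilon}$ under completeness. This step is the delicate one. Taking the intersection over rational $\epsilon\downarrow0$ gives
$$\{\inf A\le s\}=\bigcap_{\epsilon}\{\inf A<s+\epsilon\}\in\bigcap_\epsilon\cF_{s+\epsilon}=\cF_s,$$
and this is exactly where right continuity enters.

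Combining this with the decomposition above yields $\{F(\rho)\le s\}\in\cF_s$. Since $F(\rho)\le\rho\le T$, $F(\rho)$ is a stopping time in $\ST(0)$.

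If the essential-union step proves problematic, I would instead restrict $A$ to a countable dense subset of $\TS$ together with its isolated points, and argue that the infimum is unchanged. That argument requires some continuity, which the paper does not assume. So I expect that the paper's convention, namely measurability of the pathwise infimum under right continuity and completeness, is what is being used.
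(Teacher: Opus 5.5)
Your outline is the same as the paper's: split off $\{\rho\le s\}$, write $\{\inf A\le s\}=\bigcap_{\epsilon}\{\inf A<s+\epsilon\}$, and use $\cF_{s+}=\cF_s$. The paper simply asserts that $\{\exists\,u\le\bar t+\varepsilon: u\in A\}$ lies in the required $\sigma$-algebra. You rightly identify this uncountable union as the crux, but the justification you give for it is false, so the argument is not closed.

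An arbitrary union of sets from $\cF_{s+\epsilon}$ is not its essential union, and completeness does not make it measurable. The essential union contains each member only up to null sets, and the actual union can exceed it by a non-null set. Here is a concrete case. Take $\TS=[0,1]$ with the augmented Brownian filtration. Let $U$ be uniform on $(0,\frac12)$, $\cF_1$-measurable and independent of $\cF_{1/2}$ (e.g.\ a function of $B_1-B_{1/2}$). Put $J(\tau;t):=\id_{\{U=t\}}$ if $\tau=t$ a.s., and $J(\tau;t):=\frac12$ otherwise.
\begin{itemize}
\item Since $\{U=t\}$ is null, it lies in $\cF_0$, so (i)--(ii) hold.
\item For $\rho=1$ we get $\{t\in A\}=\{U=t\}$. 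The essential union of these sets is empty, yet $\bigcup_{t<s+\epsilon}\{t\in A\}=\{U<s+\epsilon\}$.
\item Indeed $F(1)=U$, which is not a stopping time because $\{U\le\frac14\}\notin\cF_{1/4}$.
\end{itemize}
So adaptedness of the individual sets $\{t\in A\}$, right continuity and completeness cannot suffice, and the lemma as stated needs an extra hypothesis. Your countable-dense-set fallback fails for the same reason, as you suspected.

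The missing ingredient is joint measurability in $(t,\omega)$. If the random set $\{(t,\omega): t\in A(\omega)\}$ is progressively measurable, then the d\'ebut theorem under the usual conditions shows that $\inf A$, and hence $F(\rho)$, is a stopping time. That progressive measurability has to come from extra structure on $t\mapsto J(\cdot;t)$, as in Section~\ref{sec62}, where the set is the hitting set of a closed set.

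Separately, your essential-supremum reading of the quantifier changes $F$. Requiring $J(t;t)>J(\tau;t)$ for every admissible $\tau$ is weaker than requiring $J(t;t)>V_t$ when the supremum is not attained. In Example~\ref{counter-jump} with $\rho=3$, $J(2;2)=0>2-\E[\tau]$ for every $\tau\in\ST(2)$ with $2<\tau\le 3$, but $V_2=0$. The same happens for every $t\in[2,3)$. Under your convention, therefore, $A=\emptyset$ and $F(3)=3$, contradicting the paper's $\rho^{(1)}=t_1=2$. A faithful $\cF_t$-measurable version of the condition would be the essential intersection of the sets $\{J(t;t)>J(\tau;t)\}$, not $\{J(t;t)>V_t\}$. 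Even then, any a.s.\ convention for the individual events $\{t\in A\}$ leaves the pathwise infimum version-dependent, as the example above shows.
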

\begin{proof}
For any constant time $\bar t\ge 0$
the event $\{\omega: \inf\{t<\rho: J(t;t)> J(\tau;t),\forall \rho \geq \tau>t\}\le \bar t\}$ is equivalent to  
$$
\bigcap_{ \varepsilon>0, \varepsilon\in {\mathbb Q}}\{ \exists \,s\le \bar t+\varepsilon, \;  J(s;s)>J(\tau;s) \mbox{ for }\forall\, \rho\geq \tau >s \} 
\in\cF_{\bar t+}=\cF_{\bar t}.
$$
Furthermore, $\{\rho\le \bar t\}\in \cF_{\bar t}$. 
Hence 
$$\{F(\rho)\le \bar t\}=\{\omega: \inf\{t<\rho: J(t;t)> J(\tau;t),\forall \rho \geq \tau>t\}\le \bar t\}\cup\{\rho\le \bar t\}\in \cF_{\bar t}.$$
\end{proof}

We assume that $\{\cF_t\}_{t\in \TS}$ is right continuous in the whole paper. 

It is clear that the mapping $F$ is monotone in the following sense. 

\begin{prop}[Monotonicity of $F$]
For any stopping time $\tau\leq \rho$, $F(\tau)\leq F(\rho)$. 
\end{prop}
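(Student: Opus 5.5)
The plan is to argue pathwise, i.e.\ for a fixed $\omega$ outside a null set, by comparing the two sets over which the infima in Definition \ref{stopping time mapping} are taken. For a stopping time $\rho$, write
$$A_\rho:=\{t<\rho:\ J(t;t)>J(\sigma;t)\ \ \forall\, \sigma\in\ST(t)\mbox{ with } t<\sigma\le\rho\},$$
so that $F(\rho)=\inf A_\rho\wedge\rho$. Fix $\tau\le\rho$.

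The key observation is an inclusion of the admissible sets of competitors. If $t<\tau$ and $t\in A_\rho$, then $t\in A_\tau$. Indeed, every stopping time $\sigma$ with $t<\sigma\le\tau$ also satisfies $t<\sigma\le\rho$, because $\tau\le\rho$. The requirement defining $A_\tau$ is therefore a sub-collection of the inequalities already guaranteed by $t\in A_\rho$. In words, immediate stopping at $t$ beats every alternative up to $\rho$, so it beats every alternative up to the earlier time $\tau$. Hence
$$A_\rho\cap\{t<\tau\}\subset A_\tau .$$

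We then split into two cases according to the value of $F(\rho)$ at $\omega$.
\begin{itemize}
\item If $F(\rho)\ge\tau$, then $F(\tau)\le\tau\le F(\rho)$ directly from the definition of $F(\tau)$.
\item If $F(\rho)<\tau$, then $F(\rho)<\tau\le\rho$, so $F(\rho)=\inf A_\rho<\tau$. Choose a sequence $t_n\in A_\rho$ with $t_n\downarrow\inf A_\rho$. Eventually $t_n<\tau$, so by the inclusion $t_n\in A_\tau$. This gives $\inf A_\tau\le t_n$, and letting $n\to\infty$ yields $F(\tau)\le\inf A_\tau\le\inf A_\rho=F(\rho)$.
\end{itemize}

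The only delicate point, which I expect to be the main obstacle, is the interpretation of the quantifier ``$\forall\,\rho\ge\tau>t$'' in Definition \ref{stopping time mapping}. Here the competitors are random times and the inequalities hold only almost surely, possibly on events. I would make explicit that the condition is read $\omega$-wise: the inequality $J(t;t)>J(\sigma;t)$ holds (a.s.\ on the relevant event) for all stopping times $\sigma$ satisfying $t<\sigma\le\rho$. With this reading, the class of $\sigma$ allowed for $\tau$ is a subclass of that for $\rho$, by property (ii) of $J$ and the a.s.\ inequality $\tau\le\rho$. The inclusion argument is then unaffected, and the monotonicity holds almost surely, which is the intended meaning.
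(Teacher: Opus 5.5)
Your proof is correct and matches what the paper intends. The paper gives no proof, only the remark that monotonicity of $F$ ``is clear''. The reason it leaves implicit is exactly yours: shrinking the horizon from $\rho$ to $\tau\le\rho$ shrinks the class of competing stopping times, so $A_\rho\cap[0,\tau)\subset A_\tau$. Hence $F(\tau)\le \inf A_\rho\wedge\tau\le F(\rho)$, and your discussion of the $\omega$-wise, almost-sure reading of the quantifier usefully clarifies a point the paper leaves vague.
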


\subsection{Approachable Time}
\begin{MyDef}[Approachable Time] A stopping time $\tau$ is called  approachable if  $F(\tau)=\tau$.
\end{MyDef}
This definition of approachability is an equivalent description of our first requirement for a solution to the problem: if $\tau$ is a solution to the stopping problem, 
then at any time $t$ before $\tau$, according to the preference at time $t$, immediate stopping is worse than stopping at some future stopping time $\rho$  before $\tau$, which implies 
$F(\tau)=\tau$.

\begin{re}
Notice that when there are multiple choices of optimal solutions, we will choose to stop at  the latest  one if possible,
hence we should not stop immediately if  immediate stopping results the same objective value as stopping at some later stopping time. We call this stopping principle as ``stop later''. 

If we choose to stop immediately when it is one of the best choices (we call it as the principle of ``stop earlier''), then an approachable time $\tau$ should be defined by 
`` $\forall\, t< \tau, \exists\, \tilde \tau\in \ST(t)$ 
with $t<\tilde \tau\le \tau$ such that 
$J(\tilde \tau;t)> J(t;t)$'',
and we need to follow the same principle in the rest of this paper. All conclusions in this section still hold with a little proper change in Definition 
\ref{delimit}, and conclusions in Section \ref{sec3} also hold other than Theorem \ref{infinty-work}.
\end{re}

The set of approachable stopping times is not empty, since $t=0$ is trivially approachable. Ideally, we prefer larger approachable times, hence we have more choices before their arrivals. Hence the largest approachable time
sounds like a suitable candidate of our solution. But is there a largest approachable time? 
  
\begin{prop} If $\tau_{1}$ and $\tau_{2}$ are both approachable times, so is $\tau_1\vee \tau_2$.
\end{prop}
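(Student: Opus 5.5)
We need to show $F(\tau_1\vee\tau_2)=\tau_1\vee\tau_2$. Set $\rho=\tau_1\vee\tau_2$. By the monotonicity of $F$, $F(\rho)\ge F(\tau_i)=\tau_i$ for $i=1,2$, so $F(\rho)\ge\rho$. By definition $F(\rho)\le\rho$, hence equality.

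The only point to check is that the monotonicity proposition applies. It requires $\tau_i\le\rho$, which holds pathwise, and $\rho$ is a stopping time since the maximum of two stopping times is one; thus $\rho\in\ST(0)$.

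For completeness, monotonicity itself can be verified pathwise. Suppose $\tau\le\rho$ and $t<\tau$ is such that $J(t;t)>J(\sigma;t)$ for all $\sigma$ with $t<\sigma\le\rho$. Then in particular this holds for all $\sigma$ with $t<\sigma\le\tau$. So the infimum set defining $F(\rho)$, restricted to $t<\tau$, is contained in that defining $F(\tau)$. Therefore $F(\rho)\le F(\tau)$ would be the naive direction, which is the opposite of what we want, so the set inclusion must be handled carefully.

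Redo the argument directly without relying on monotonicity. Suppose, for contradiction, that on a non-null event $F(\rho)<\rho$. Then there is a $t<\rho$ such that $J(t;t)>J(\sigma;t)$ for all $t<\sigma\le\rho$. On $\{t<\tau_1\}$ this strict inequality holds in particular for all $\sigma\le\tau_1$, since $\tau_1\le\rho$. So $t$ belongs to the set defining $F(\tau_1)$, giving $F(\tau_1)\le t<\tau_1$ and contradicting approachability of $\tau_1$. The same argument applies on $\{t<\tau_2\}$. Since $t<\rho$ forces $t<\tau_1$ or $t<\tau_2$, we get a contradiction in either case. Hence $F(\rho)=\rho$.

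The main obstacle is the measure-theoretic care in this step. The quantifier "$\forall\sigma$" ranges over stopping times, while the comparison happens on events. One must ensure that a $\sigma$ with $\sigma\le\tau_1$ is admissible in the $\rho$-comparison, which it is since $\tau_1\le\rho$, and that the condition can be localized to $\{t<\tau_1\}$ in a pathwise reading of the definition of $F$. This direct argument is the one I would present.
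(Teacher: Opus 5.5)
Your first paragraph is exactly the paper's proof and is already complete. Monotonicity gives $F(\tau_1\vee\tau_2)\ge F(\tau_1)\vee F(\tau_2)=\tau_1\vee\tau_2$, and $F(\rho)\le\rho$ holds by definition.

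Your later worry about monotonicity is a slip. Write $S(\rho)$ for the set of $t<\rho$ with $J(t;t)>J(\sigma;t)$ for all $\sigma$ such that $t<\sigma\le\rho$. For $\tau\le\rho$ you have the inclusion $S(\rho)\cap\{t<\tau\}\subseteq S(\tau)$. A larger set has a smaller infimum, so this inclusion yields $F(\tau)\le F(\rho)$; the case $F(\rho)\ge\tau$ is trivial. That is precisely the direction you need, not the opposite one.

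Your ``direct'' argument uses that same inclusion on the events $\{t<\tau_i\}$. It is monotonicity inlined, so it is correct but not a different route. There was no reason to abandon the first paragraph.
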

\begin{proof}
Firstly,  $\tau_{1}\vee \tau_{2}$ is a stopping time. 

By monotonicity of $F$, we have: $F(\tau_1)\leq F(\tau_1\vee \tau_2)$ and $F(\tau_2) \leq  F(\tau_1\vee \tau_2)$. Since $\tau_1$ and $\tau_2$ are both approachable, so we have: $\tau_1 \vee \tau_2=F(\tau_1)\vee F(\tau_2)\leq F(\tau_1 \vee \tau_2) \leq \tau_1 \vee \tau_2$. 

Hence, we have $F(\tau_1 \vee \tau_2)=\tau_2 \vee \tau_2$ which verifies that $\tau_1\vee \tau_2$ is approachable.
\end{proof}

\begin{prop}
For any increasing sequence of approachable times $\{\tau_{n}\}_{n=1,2,\cdots}$,
its limit $\tau:=\lim_{n\rightarrow +\infty} \tau_n$ is  approachable.    
\end{prop}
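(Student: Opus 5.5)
We have to show $F(\tau)=\tau$. Since $F(\tau)\le\tau$ always holds by the definition of $F$, it suffices to prove $F(\tau)\ge\tau$.

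First, $\tau=\lim_n\tau_n=\sup_n\tau_n$ is a stopping time, being the supremum of countably many stopping times. Since $\tau_n\le\tau$ for every $n$, the Monotonicity of $F$ gives
$$\tau_n=F(\tau_n)\le F(\tau)\qquad\mbox{for every } n.$$
Letting $n\to\infty$ yields $\tau\le F(\tau)$, hence $F(\tau)=\tau$. No continuity of $F$ is used at all; only monotonicity and the fixed-point property of each $\tau_n$.

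If one prefers to avoid quoting monotonicity, the same conclusion can be checked pointwise. Fix $\omega$ and any $t<\tau(\omega)$. Then $t<\tau_n(\omega)$ for some $n$. Because $\tau_n$ is approachable, $t$ does not belong to the set defining $F(\tau_n)$: there is some $\tilde\tau$ with $t<\tilde\tau\le\tau_n$ and $J(t;t)\le J(\tilde\tau;t)$. Since $\tilde\tau\le\tau_n\le\tau$, the same $\tilde\tau$ witnesses that $t$ is not in the set defining $F(\tau)$. Hence that set contains no $t<\tau$, and its infimum is at least $\tau$.

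The only delicate point is that the pointwise phrasing ``for some $n$'' depends on $\omega$, whereas $\tilde\tau$ must be a genuine stopping time in $\ST(t)$. This is exactly why the argument through the already established Monotonicity of $F$, which handles the quantifiers at the level of stopping times, is the cleaner route, and it is the one I would present.
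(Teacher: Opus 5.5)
Your proposal is correct and is essentially the paper's proof: monotonicity of $F$ gives $\tau_n=F(\tau_n)\le F(\tau)\le\tau$ for every $n$, and letting $n\to\infty$ yields $F(\tau)=\tau$. The pointwise variant you sketch is not needed, and you are right that its $\omega$-dependent choice of $n$ makes it the less clean route.
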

\begin{proof}
Obviously, $\tau$ is a stopping time and we know that $\forall n, \tau_n=F(\tau_n) \leq F(\tau)$ since every $\tau_n$ is approachable and $F$ is monotone.

Therefore, we have: $\tau=\lim_{n\rightarrow +\infty} \tau_n=\lim_{n\rightarrow +\infty} F(\tau_n) \leq F(\tau) \leq \tau$. 

Hence, we have shown that $\tau=F(\tau)$.
\end{proof}

With these properties of approachable times, we are going to prove that the essential supreme of all approachable times is the largest approachable time. To this end, we use the following property of the essential supreme, 
whose proof can be found in \cite{peskir2006optimal}. 

\begin{thrm}\label{esssup}
 Given $\{Z_{\alpha}:\alpha \in \mathbb{I}\}$ be a family of random variables, where the index set $\mathbb{I}$ 
 can be uncountable. There exists a countable subset   $\mathbb{J}$ of $\mathbb{I}$ 
 such that the random variable $Z^{*}:=sup_{\alpha \in \mathbb{J}}Z_{\alpha}$  satisfies the 
 following two properties:
\begin{itemize}
	\item [(i)] $\PP(Z_{\alpha}\leq Z^{*})=1, \forall \alpha \in \mathbb{I}$;
	\item [(ii)] If there exists another random variable $\hat Z$ satisfies (i), then
	$\PP(Z^*\le \hat Z)=1$. 
\end{itemize}
\end{thrm}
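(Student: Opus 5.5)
We have to prove Theorem \ref{esssup}: for an arbitrary family $\{Z_\alpha:\alpha\in\mathbb{I}\}$ of random variables there is a countable subfamily whose pointwise supremum is the essential supremum. This is the classical essential-supremum construction. The plan is to compress the real line into a bounded interval, maximise an expectation over countable subfamilies, and then show by contradiction that the maximiser dominates every member of the family.

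First we reduce to bounded variables. Fix a strictly increasing continuous bijection $\phi:[-\infty,+\infty]\to[-1,1]$, for instance $\phi(x)=\frac{2}{\pi}\arctan x$ extended by $\phi(\pm\infty)=\pm1$. Since $\phi$ is an order isomorphism, it commutes with countable suprema and preserves almost sure inequalities. It therefore suffices to prove the statement for $Y_\alpha:=\phi(Z_\alpha)$, which take values in $[-1,1]$, and then set $Z^*=\phi^{-1}(\sup_{\alpha\in\mathbb{J}}Y_\alpha)$. We allow $Z^*$ to take the value $+\infty$; this is harmless for the application to stopping times.

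Next we maximise over countable subfamilies. Let $\mathcal{C}$ be the collection of countable subsets $C\subset\mathbb{I}$, and put
$$s:=\sup_{C\in\mathcal{C}}\E\Big[\sup_{\alpha\in C}Y_\alpha\Big]\le 1.$$
Each inner supremum is measurable because $C$ is countable. Choose $C_n\in\mathcal{C}$ with expectations tending to $s$, and let $\mathbb{J}=\bigcup_n C_n$, which is still countable. By monotonicity, $Y^*:=\sup_{\alpha\in\mathbb{J}}Y_\alpha$ satisfies $\E[Y^*]=s$, so the supremum is attained.

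The main step is property (i), which we prove by contradiction. Suppose $\PP(Y_\beta>Y^*)>0$ for some $\beta\in\mathbb{I}$. Then $\mathbb{J}\cup\{\beta\}$ is countable, and $\E[Y^*\vee Y_\beta]>\E[Y^*]=s$ because the integrand is at least $Y^*$ everywhere and strictly larger on a set of positive measure. This contradicts the maximality of $s$. The only delicate point is that the argument needs finite expectations, and this is exactly why we bounded the variables first. Transferring back through $\phi^{-1}$ gives (i).

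Property (ii) is immediate. If $\hat Z$ satisfies (i), then $Z_\alpha\le\hat Z$ almost surely for each $\alpha\in\mathbb{J}$. Because $\mathbb{J}$ is countable, the union of the exceptional null sets is still null, so $Z^*=\sup_{\mathbb{J}}Z_\alpha\le\hat Z$ almost surely. Countability of $\mathbb{J}$ is the whole point of the construction, and it is also what later makes the essential supremum of approachable times the limit of a countable, and after taking finite maxima increasing, sequence of approachable times.
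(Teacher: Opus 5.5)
Your proof is correct and is essentially the standard argument in \cite{peskir2006optimal}, which is where the paper defers the proof. That argument has the same four steps as yours: reduce to bounded variables via an arctan-type order isomorphism, pick a countable index set $\mathbb{J}$ maximising $\E[\sup_{\alpha\in C}Y_\alpha]$ over countable $C\subset\mathbb{I}$, obtain (i) by contradiction after adjoining one more index, and get (ii) from the countability of $\mathbb{J}$. Extending $\phi$ to $[-\infty,+\infty]$ is a harmless refinement that covers the case where $Z^*$ or some $Z_\alpha$ (for instance a stopping time) takes the value $+\infty$.
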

In this Theorem, $Z^{*}$ is called the essential supremum of $\{Z_{\alpha}:\alpha \in \mathbb{I}\}$ and is denoted by $Z^{*}={\rm esssup}_{\alpha \in \mathbb{I}}Z_{\alpha}$.

Define 
$$\tau_*:=\rm{esssup} \{\mbox{all approachable times}\}. $$    

\begin{thrm}
$\tau_*$ is  the maximal approachable time. 
\end{thrm}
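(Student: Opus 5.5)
We show two things: $\tau_*$ is itself approachable, and every approachable time is dominated by it. The second part is immediate from property (i) of Theorem \ref{esssup}: $\tau\le\tau_*$ a.s. for every approachable $\tau$. So the work is to show $F(\tau_*)=\tau_*$. We also note that $\tau_*$ is a stopping time. It is the supremum of countably many stopping times, and under the right-continuity of the filtration assumed throughout, such a supremum is a stopping time.

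To prove approachability, we use the countable subfamily $\mathbb{J}$ from Theorem \ref{esssup}, so that $\tau_*=\sup_{\alpha\in\mathbb{J}}\tau_\alpha$ with each $\tau_\alpha$ approachable. Enumerate $\mathbb{J}=\{\alpha_1,\alpha_2,\dots\}$ and set
$$\sigma_n:=\tau_{\alpha_1}\vee\cdots\vee\tau_{\alpha_n}.$$
Applying the proposition on maxima of two approachable times inductively, each $\sigma_n$ is approachable. The sequence $\{\sigma_n\}$ is increasing and $\lim_n\sigma_n=\tau_*$. By the proposition on increasing limits of approachable times, $\tau_*$ is approachable. Combined with the domination above, $\tau_*$ is the maximal approachable time. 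It is unique up to a.s. equality, since two maximal ones dominate each other.

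The only delicate point is the passage from an uncountable family to a genuine pointwise limit. The essential supremum is only defined up to null sets, while $F$ and the approachability identity are pathwise. This is exactly why we route through the countable subfamily $\mathbb{J}$: there the supremum is an honest pointwise supremum of stopping times, and the earlier propositions apply verbatim. We also use property (ii) of the definition of $J$, namely that $J$ is insensitive to a.s. modifications, so the a.s. ambiguity in $\tau_*$ does not affect $F(\tau_*)$ beyond a null set.
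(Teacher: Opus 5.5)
Your proof is correct and follows the paper's argument: take the countable subfamily from Theorem~\ref{esssup}, form running finite maxima (approachable by the proposition on maxima of two approachable times), and pass to the increasing limit. A minor aside: a countable supremum of stopping times is a stopping time even without right-continuity, since $\{\sup_n\tau_n\le t\}=\bigcap_n\{\tau_n\le t\}$.
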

\begin{proof}
We only need to prove that $\tau_*$ is an approachable  time. 

By Theorem \ref{esssup}, we know there exists a sequence of approachable times $\{\tau_n: n=1,2,\cdots\}$
such that $\tau_*=\sup_{n} \tau_n$.  

Define $\tilde \tau_n=\max_{j=1}^n\{ \tau_j\}$, then $\tilde\tau_n\uparrow \tau_*$, every
 $\tilde\tau_n$ is 
 approachable, hence $\tau_*$ is approachable. 

\end{proof}

\subsection{Delimiting Time}\label{subsec-del-time} 
To construct a solution which also follows the spirit of second requirement, we start with the so called {\it naive time}
$\rho^{(1)}:=F(T).$

Roughly speaking, the naive time is the first time  $t$ when  immediate stopping is optimal according to the preference $J(\cdot;t)$. 
Stopping at the naive time does not consider possible changes in the future preferences (which may result in some earlier stopping), hence it is not in general a sophisticated solution.
However, if a  player reaches the naive time,  she should stop immediately even she has no sophisticated thinking, hence we call it as the $1^{st}$-order naive time (and this is what the upper label means). Furthermore, given the player has some sophisticated thinking, chopping off the time axis from $\rho^{(1)}$ will not change the problem at all. After the chopping, we can repeat the same operation by taking $\rho^{(1)}$ as the new time horizon. 
These  operations will lead to a backward iteration, which will hopefully approach a sophisticated solution. 


\begin{MyDef}[$n^{th}$-order Naive Time]
Denote $\rho^{(0)}=T$. For any $n\ge 1$, $\rho^{(n)}:=F(\rho^{(n-1)})$ is called the $n^{th}$-order naive time.
\end{MyDef}

Economically, we call a player $n^{th}$-order rational if she makes decision naively with the knowledge that  her solution will not go beyond $\rho^{(n-1)}$. 
For example, a $1^{st}$-order rational player makes decision naively, and a $2^{nd}$-order rational player knows that in the future she will play with $1^{st}$-order rationality, and she makes decision naively with this knowledge, hence  her solution will not go beyond $\rho^{(1)}$. With this terminology,  the $n^{th}$-order naive time  is the solution for a player with $n^{th}$-order rationality. 

Mathematically, $n^{th}$-order naive times define a decreasing sequence of stopping times. If we cut off the time set $\TS$ from the $n^{th}$-order naive time, the problem remains the same for  a player with $n^{th}$-order  rationality or with higher rationality, and this is why we also call all $n^{th}$-order naive times  delimiting. 
Furthermore, we define 
$$\rho^{(\infty)}:=\lim_{n\rightarrow +\infty}\rho^{(n)}.$$ 

Intuitively, $\rho^{(\infty)}$ is the solution for a player with infinite-order rationality, and hence can be a good candidate of our solution if it is a fixed point of $F$, i.e., an approachable time. Unfortunately, this is not true in general, and we will disprove it  by a counter example in Section \ref{sec6}. 

In case that $\rho^{(\infty)}$ is not a fixed point, we  need to continue the search for further delimiting time, which is a  solution for players with even higher order rationality.
In the rest of this subsection, we will find the limiting stopping time with delimiting property by the methods in \cite{lang2012algebra}. 

We firstly introduce an auxiliary concept called admissible set.

\begin{MyDef}[Admissible Set]\label{adm-set}
A subset of $\ST(0)$, denoted by $\bB$, is called an admissible set if:
\begin{itemize}
    \item [(i)] $T\in \bB$.
    \item [(ii)] Whenever $\rho \in \bB$, we have  $F(\rho) \in \bB$.
    \item [(iii)] For any totally ordered subset of $\bB$, its essential infimum is an element of $\bB$. 
\end{itemize}
\end{MyDef}

The following proposition states that intersection of two admissible sets is still an admissible set.

\begin{theorem}\label{close-to-int}
If  $\{\bB_\alpha\}_{\alpha\in A}$ is a family of admissible sets, then    $\cap_{\alpha\in A}\bB_\alpha$ is  an admissible set. 
\end{theorem}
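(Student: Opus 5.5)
Set $\bB:=\cap_{\alpha\in A}\bB_\alpha$. The plan is to check the three conditions of Definition \ref{adm-set} one at a time, using only that each $\bB_\alpha$ satisfies them. None of the three needs anything beyond unwinding definitions. Throughout, stopping times that agree almost surely are identified, in line with property (ii) of $J$ and with the fact that essential infima are only defined up to null sets.

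\emph{Condition (i).} Since $T\in\bB_\alpha$ for every $\alpha$, we have $T\in\bB$. If $A$ were empty, the intersection would be understood as $\ST(0)$ itself. That set is trivially admissible: it contains $T$, $F$ maps $\ST(0)$ into itself, and the essential infimum of a family of stopping times is again a stopping time, as in the argument for condition (iii) below.

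\emph{Condition (ii).} Take $\rho\in\bB$. Then $\rho\in\bB_\alpha$ for each $\alpha$, so condition (ii) for $\bB_\alpha$ gives $F(\rho)\in\bB_\alpha$. As this holds for every $\alpha$, $F(\rho)\in\bB$. Here I use that $F$ is a single fixed map on $\ST(0)$, well defined by Lemma \ref{cont-J-0}, so $F(\rho)$ is the same stopping time whichever $\bB_\alpha$ we view $\rho$ in.

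\emph{Condition (iii).} Let $\bM\subset\bB$ be totally ordered, and let $\sigma$ be its essential infimum, obtained from Theorem \ref{esssup} applied to $\{-\tau:\tau\in\bM\}$. The key observation is that $\sigma$ depends only on the family $\bM$ and not on any ambient admissible set: it is the almost surely unique random variable that is a.s. below every element of $\bM$ and a.s. above every other such lower bound. Now fix $\alpha$. Then $\bM\subset\bB_\alpha$, and $\bM$ is still totally ordered as a subset of $\bB_\alpha$, so condition (iii) for $\bB_\alpha$ gives $\sigma\in\bB_\alpha$. Since $\alpha$ was arbitrary, $\sigma\in\bB$.

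\emph{Expected obstacle.} The only delicate point is this last step. One must make sure that ``the essential infimum is an element of $\bB_\alpha$'' means the same random variable, up to a.s. equality, for every $\alpha$; this is exactly the uniqueness statement (ii) of Theorem \ref{esssup}. One should also note that $\sigma$ is a stopping time. Theorem \ref{esssup} writes $\sigma$ as the infimum of a countable subfamily, and a countable infimum of stopping times is a stopping time under the standing right-continuity assumption. In any case this already follows from $\sigma\in\bB_\alpha\subset\ST(0)$. With these points noted, the three conditions hold, and $\bB$ is admissible.
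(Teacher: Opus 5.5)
Your proposal is correct and follows the same route as the paper: each of the three conditions in Definition~\ref{adm-set} is checked for $\bigcap_{\alpha\in A}\bB_\alpha$ by applying the corresponding condition to every $\bB_\alpha$ separately. Your extra remarks are sound details the paper leaves implicit: the empty index family, the fact that the essential infimum is determined a.s.\ independently of the ambient set, and the fact that it is a stopping time. Do rename your totally ordered family, though, since $\bM$ already denotes the delimiting set.
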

\begin{proof}
Clearly, we know that $T\in \bB_\alpha$, so $T\in \cap_{\alpha\in A} \bB_\alpha$. This verifies [(i)] in Definition \ref{adm-set}.

For any $\rho \in \cap_{\alpha\in A}\bB_\alpha$, we know that $\rho \in \bB_\alpha$ for any  $\alpha\in A$. Since every $\bB_\alpha$ ism an admissible set, we have $F(\rho) \in \bB_\alpha$. Hence, $F(\rho)\in \cap_{\alpha\in A} \bB_\alpha$. This verifies [(ii)]. 

For any totally ordered subset of $\cap_{\alpha\in A}\bB_\alpha$, since $\bB_\alpha$ is an admissible set, we know its essential infimum is an element of  $\bB_\alpha$ . This verifies [(iii)].
\end{proof}

\begin{MyDef}[Delimiting Set and Delimiting Time]\label{delimit}
The intersection of all admissible sets, denoted by $\bM$,  is called the delimiting set. A stopping time $\rho$ is called a delimiting time if $\rho \in \bM$.
\end{MyDef}

By Theorem \ref{close-to-int},   we have the following interpretation of the delimiting set $\bM$.

\begin{prop}
The delimiting set $\bM$ is the smallest admissible set.
\end{prop}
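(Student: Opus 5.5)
The plan is to derive the claim directly from Theorem \ref{close-to-int} together with the definition of $\bM$; essentially no new estimate is needed. There are two things to check. First, $\bM$ is itself an admissible set. Second, $\bM$ is contained in every admissible set. Together these say $\bM$ is the smallest admissible set with respect to inclusion.

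For the first point, I would let $\{\bB_\alpha\}_{\alpha\in A}$ be the family of \emph{all} admissible subsets of $\ST(0)$. I would note that this family is a set, since it is a subfamily of the power set of $\ST(0)$, so the intersection is legitimate. I would also note that the family is nonempty: $\ST(0)$ itself is admissible. Indeed, $T\in\ST(0)$. Next, $F$ maps $\ST(0)$ into itself by Lemma \ref{cont-J-0}, which uses the standing right-continuity assumption. Finally, the essential infimum of any totally ordered family of stopping times is again a stopping time in $\ST(0)$. To see this, apply Theorem \ref{esssup} to the negatives to get a countable subfamily whose infimum realises the essential infimum. A countable infimum of stopping times is a stopping time under right continuity, and it is $\ge 0$. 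With nonemptiness in hand, Theorem \ref{close-to-int} applies directly to show that $\bM=\cap_{\alpha\in A}\bB_\alpha$ is admissible.

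For the second point, minimality is immediate. If $\bB$ is any admissible set, then $\bB$ is one of the $\bB_\alpha$, so $\bM\subseteq \bB$. Hence $\bM$ is admissible and lies inside every admissible set, which is exactly the statement. Uniqueness of a smallest element also follows trivially from antisymmetry of inclusion.

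The only step with any content is verifying that $\ST(0)$ is admissible, specifically condition (iii). This requires the essential infimum to be an honest stopping time and not just a random variable. I would handle this via the countable-subfamily representation of Theorem \ref{esssup} together with right continuity of the filtration. Everything else is formal set theory.
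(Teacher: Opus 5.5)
Your proposal is correct and follows the paper's route: the paper obtains the proposition directly from Theorem~\ref{close-to-int} applied to the family of all admissible sets. This gives admissibility of $\bM$, and minimality holds by construction. Your explicit check that this family is nonempty (that $\mathcal{T}(0)$ itself is admissible, via Lemma~\ref{cont-J-0} and the countable-infimum representation of the essential infimum under right continuity) is a step the paper leaves implicit. It is a worthwhile addition, since with an empty family the intersection would degenerate and the claim would fail.
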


Furthermore, we  show by the method in \cite{lang2012algebra} that the delimiting set is actually totally ordered, i.e, $\forall \tau_1, \tau_2\in \bM$, either $\tau_1\geq \tau_2$ or  $\tau_2 \geq \tau_1$. 


\begin{MyDef}[Extreme Point] \label{extreme-point}
A stopping time $\rho\in \bM$ is called an extreme point if 
    for $\tau \in \bM$, $\tau\geq  \rho$ and $\mathbb{P}(\tau >\rho)>0$ implies $F(\tau)\geq \rho$. 
\end{MyDef}

For an extreme point $\rho\in \bM$, define
\begin{equation}
    \bM(\rho)=\{\tau \in \bM: \mbox{either }\tau \geq \rho \mbox{ or } F(\rho) \geq \tau \}. 
\end{equation}

\begin{lemma}\label{extreme set}
    For every extreme point $\rho \in \bM$, we have $\bM(\rho)=\bM$. 
\end{lemma}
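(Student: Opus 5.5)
The plan follows the proof of the Bourbaki--Witt fixed point lemma in \cite{lang2012algebra}. We show that $\bM(\rho)$ is itself an admissible set. Since $\bM(\rho)\subseteq\bM$ by construction, and $\bM$ is the smallest admissible set (the proposition following Definition \ref{delimit}), this gives $\bM(\rho)=\bM$. So everything reduces to checking the three conditions of Definition \ref{adm-set} for $\bM(\rho)$, where $\rho\in\bM$ is a fixed extreme point.

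Condition (i) is immediate: $T\in\bM$ and $T\geq\rho$, so $T\in\bM(\rho)$. For condition (iii), take a totally ordered subset $\{\tau_\alpha\}\subseteq\bM(\rho)$ and let $\tau=\mathrm{essinf}_\alpha\tau_\alpha$. Then $\tau\in\bM$, because $\bM$ is admissible. There are two cases.
\begin{itemize}
\item If every $\tau_\alpha\geq\rho$, then $\tau\geq\rho$, since $\rho$ is a lower bound and the essential infimum is the largest lower bound a.s.
\item Otherwise some $\tau_\alpha$ fails $\tau_\alpha\ge\rho$. Since $\tau_\alpha\in\bM(\rho)$, that $\tau_\alpha$ satisfies $\tau_\alpha\le F(\rho)$, and hence $\tau\le\tau_\alpha\le F(\rho)$.
\end{itemize}
In both cases $\tau\in\bM(\rho)$.

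Condition (ii) is the substantive step. Take $\tau\in\bM(\rho)$; we must show $F(\tau)\in\bM(\rho)$, and $F(\tau)\in\bM$ holds already. There are three cases.
\begin{itemize}
\item If $\tau\le F(\rho)$, then $F(\tau)\le\tau\le F(\rho)$.
\item If $\tau=\rho$ a.s., then $F(\tau)=F(\rho)\le F(\rho)$, using property (ii) of $J$ (so $F$ respects a.s.\ equality).
\item If $\tau\ge\rho$ and $\PP(\tau>\rho)>0$, then the extreme point property (Definition \ref{extreme-point}) gives $F(\tau)\geq\rho$ directly.
\end{itemize}
So $F(\tau)\in\bM(\rho)$ in every case. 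Note that $F(\rho)\le\rho$ and the monotonicity of $F$ are used freely throughout.

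I expect the main obstacle to be a measure-theoretic subtlety rather than the logic. The orderings above are almost-sure pathwise inequalities between stopping times. "Either $\tau\ge\rho$ or $F(\rho)\ge\tau$" is a global dichotomy, not an $\omega$-wise one, so the case splits above must be made globally, exactly as written. The place needing care is the case split in (iii). If some $\tau_\alpha$ does not dominate $\rho$ a.s., then its membership in $\bM(\rho)$ forces $\tau_\alpha\le F(\rho)$ a.s., and this single element already bounds the infimum. I will also make sure that the essential infimum of the family is well defined and agrees with a countable infimum via Theorem \ref{esssup} applied to $-\tau_\alpha$, so that $\tau\le\tau_\alpha$ a.s.\ for each $\alpha$ and $\tau\ge\rho$ a.s.\ when all $\tau_\alpha\ge\rho$.
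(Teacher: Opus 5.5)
Your proposal is correct and follows the paper's proof essentially step for step. You show that $\bM(\rho)$ is admissible using the same three-case split for condition (ii) and the same two-case split for condition (iii), and then conclude by minimality of $\bM$. You are slightly more careful than the paper in stating explicitly that $F(\tau)$ and the essential infimum already lie in $\bM$ because $\bM$ is admissible, which is needed since $\bM(\rho)$ is defined as a subset of $\bM$.
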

\begin{proof}
    Note that $\bM$ is the smallest admissible set, so we just need to prove that $\bM(\rho)$ is an admissible set.
    
    It is easy to see that $T \in \bM(\rho)$. So (i)  in Definition \ref{adm-set} is verified. 

    Let $\tau \in \bM(\rho)$. We verify that $F(\tau)\in \bM(\rho)$ in three cases. 
    \begin{itemize}
    \item [(1)] $\tau \geq \rho$ and $\mathbb{P}(\tau>\rho)>0$.  In this case,   by Definition \ref{extreme-point}, $F(\tau)\geq \rho$, 
    hence  $F(\tau) \in  \bM(\rho)$. 
    \item [(2)] $\tau=\rho$. In this case, $F(\tau)=F(\rho)$. Since $F(\rho) \in \bM(\rho)$, we have $F(\tau)\in \bM(\rho)$. 
    \item [(3)] $F(\rho)\geq \tau$. In this case,  $F(\rho)\geq \tau \geq F(\tau)$, hence 
     $F(\tau) \in \bM(\rho)$. 
     \end{itemize}

    Finally,  let $\bS$ be a totally ordered subset of $\bM(\rho)$ and let $\hat \tau$ be its essential infimum. Note that one can compare any element in $\bM(\rho)$ with $\rho$, 
    we just need to consider the following two cases:
    \begin{itemize}
    \item [(1)] $ \tau^{'}\geq \rho$ for all $\tau^{'} \in \bS$. In this case, $\hat \tau \geq \rho$ which shows that $\hat \tau \in \bM(\rho)$.
    \item [(2)] $\exists \tau^{''} \in \bS, F(\rho)\geq \tau^{''}$. In this case, $F(\rho)\geq \tau^{''} \geq \hat \tau$ which also shows $\hat \tau \in \bM(\rho)$. 
    \end{itemize}
\end{proof}

\begin{lemma}\label{extreme}
    Every element of $\bM$ is an extreme point. 
\end{lemma}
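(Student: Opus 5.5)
We follow Lang's proof of the Bourbaki–Witt fixed point theorem. Let $\mathbb{E}$ denote the set of extreme points of $\bM$; the claim is $\mathbb{E}=\bM$. Since $\bM$ is the smallest admissible set, it suffices to prove that $\mathbb{E}$ (a subset of $\bM$) is itself admissible. Then $\bM\subseteq \mathbb{E}$, which gives equality.

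\textbf{Condition (i).} $T$ is extreme vacuously: no $\tau\in\bM\subseteq\ST(0)$ satisfies $\tau\ge T$ together with $\mathbb{P}(\tau>T)>0$.

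\textbf{Condition (ii).} Let $\rho$ be extreme; we show $F(\rho)$ is extreme. Take $\tau\in\bM$ with $\tau\ge F(\rho)$ and $\mathbb{P}(\tau>F(\rho))>0$. By Lemma \ref{extreme set}, $\bM(\rho)=\bM$, so either $\tau\ge\rho$ or $F(\rho)\ge\tau$. The second alternative forces $\tau=F(\rho)$ a.s., which contradicts $\mathbb{P}(\tau>F(\rho))>0$. Hence $\tau\ge\rho$, and there are two cases.
\begin{itemize}
\item If $\tau=\rho$, then $F(\tau)=F(\rho)\ge F(\rho)$.
\item If $\mathbb{P}(\tau>\rho)>0$, extremality of $\rho$ gives $F(\tau)\ge\rho\ge F(\rho)$.
\end{itemize}
In both cases $F(\tau)\ge F(\rho)$, so $F(\rho)\in\mathbb{E}$.

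\textbf{Condition (iii).} Let $\bS\subseteq\mathbb{E}$ be totally ordered, with essential infimum $\hat\rho$. Since $\bM$ is admissible, $\hat\rho\in\bM$. Take $\tau\in\bM$ with $\tau\ge\hat\rho$ and $\mathbb{P}(\tau>\hat\rho)>0$. For each $\rho\in\bS$, Lemma \ref{extreme set} gives either $\tau\ge\rho$ or $\tau\le F(\rho)\le \rho$. There are two cases.
\begin{itemize}
\item Suppose some $\rho\in\bS$ satisfies $\tau\le F(\rho)$ and $\tau\neq\rho$. Because $\bS$ is totally ordered, for every $\rho'\in\bS$ either $\rho'\ge\rho\ge\tau$, or $\rho'\le\rho$. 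In the latter subcase, comparing $\tau$ with $\rho'$ via $\bM(\rho')=\bM$ yields either $\tau\ge\rho'$, or $\tau\le F(\rho')\le\rho'$.
\item Otherwise $\tau\ge\rho$ for every $\rho\in\bS$. In this case $\tau\ge\hat\rho$ holds with each $\rho\ge\hat\rho$, and we need $F(\tau)\ge\hat\rho$.
\end{itemize}

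The cleaner route, as in Lang, is the following. If $\tau\ge\rho$ for all $\rho\in\bS$ and some $\rho$ has $\mathbb{P}(\tau>\rho)>0$, then $F(\tau)\ge\rho\ge\hat\rho$ by extremality. If instead $\tau=\rho$ a.s. for the relevant $\rho$, then $\tau\in\bS$, and since $\tau$ is extreme we have $F(\tau)\ge$ every smaller element. We then need $F(\tau)\ge\hat\rho$, which is easy when $\tau$ is not the minimum of $\bS$ (apply extremality of a strictly smaller element). If $\tau$ is the minimum of $\bS$, then $\tau=\hat\rho$, which is excluded by hypothesis.

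Finally, if some $\rho\in\bS$ satisfies $\tau\le F(\rho)\le\rho$, then $\hat\rho\le\tau\le\rho$. Using the total order and the dichotomy for every $\rho'\le\rho$ in $\bS$, we obtain that $\tau\le F(\rho')$ for all such $\rho'$ not below $\tau$. Passing to the essential infimum then pins $\tau$ down to $\hat\rho$, contradicting $\mathbb{P}(\tau>\hat\rho)>0$.

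\textbf{Main obstacle.} The main difficulty is condition (iii). The dichotomies hold only pathwise-almost-surely as orderings of random times, and "neither $\ge$ nor $\le$ strictly" cases must be ruled out by the total-order structure supplied by Lemma \ref{extreme set}. I would handle this carefully by splitting $\bS$ into the elements below $\tau$ and the elements above $\tau$, and using the countable sequence from Theorem \ref{esssup} that realizes the essential infimum.
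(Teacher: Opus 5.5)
Your overall strategy (show that the set of extreme points is admissible) and your verification of conditions (i) and (ii) match the paper's proof. The gap is in condition (iii). You split into ``$\tau\ge\rho$ for every $\rho\in\bS$'' versus ``some $\rho\in\bS$ has $\tau\le F(\rho)$'', and in the second case you assert a contradiction. That assertion is false, because $\tau$ can lie strictly between elements of $\bS$.

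Take $\rho_1,\rho_2\in\bS$ with $\rho_2\le\tau\le F(\rho_1)$ and $\mathbb{P}(\tau>\rho_2)>0$. A concrete instance is $\bS=\{T,\rho^{(2)}\}$ and $\tau=\rho^{(1)}$ in Example~\ref{counter-jump}, where $T=3>\rho^{(1)}=2>\rho^{(2)}=3/2$. All your hypotheses hold, and $\tau\le F(T)$, yet nothing is contradictory; $F(\tau)\ge\hat\rho$ is true there for a different reason. Your ``pinning down'' step fails because the elements of $\bS$ not below $\tau$ need not have essential infimum $\hat\rho$. The infimum is governed by the elements below $\tau$, and for those you must use their extremality, not the bound $\tau\le F(\rho')$. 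This configuration is not covered by your first case either, so (iii) is not proved as written.

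The repair is to split on the elements below $\tau$. If some $\rho'\in\bS$ satisfies $\rho'\le\tau$ and $\mathbb{P}(\tau>\rho')>0$, then extremality of $\rho'$ gives $F(\tau)\ge\rho'\ge\hat\rho$. It is the extremality of $\rho'$ that is used here, not that of $\tau$, as your ``cleaner route'' paragraph suggests. Otherwise, the dichotomy from Lemma~\ref{extreme set} says every $\rho\in\bS$ satisfies either $\tau=\rho$ or $\tau\le F(\rho)\le\rho$. Then $\tau$ is a lower bound of $\bS$, so $\tau\le\hat\rho$, contradicting $\mathbb{P}(\tau>\hat\rho)>0$.

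This is essentially the paper's argument. The paper first finds $\tilde\rho\in\bS$ with $\tau\ge\tilde\rho$ and treats $\tau=\tilde\rho$ separately. Your remark ``apply extremality of a strictly smaller element'' is exactly the justification for the paper's terse final step $F(\tilde\rho)\ge\hat\rho$. No countable sequence from Theorem~\ref{esssup} is needed. The ``pathwise'' worry is also moot, since Lemma~\ref{extreme set} already gives a global dichotomy.
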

\begin{proof}
    Let $\hat \bM$ be the set of all extreme points of $\bM$. Similar to the proof of the previous lemma, it suffices to prove that $\hat\bM$ is an admissible set.

  $T\in \hat\bM$ is trivial. We turn to verify (ii) in Definition \ref{adm-set}. 
  Fix any extreme point $ \rho \in \hat\bM$, we need to show  that $F(\rho)$ is also an extreme point, i.e., 
  for any $\tau\in \bM$,  if $\tau\ge F(\rho)$ and $\mathbb{P}(\tau>F(\rho))>0$, then $F(\tau)\ge F(\rho)$. 
  
  Since $\tau\in \bM=\bM(\rho)$, we know 
there are only three cases for the comparison of $\tau$ and $\rho$:
\begin{itemize}
\item $\tau\ge \rho$ and $\mathbb{P}(\tau>\rho)>0$. In this case, $F(\tau)\ge \rho\ge F(\rho)$.
\item $\tau=\rho$. In this case, $F(\tau)=F(\rho)$. 
\item $\tau\le F(\rho)$. This is not possible given that $\tau\ge F(\rho)$ and $\mathbb{P}(\tau>F(\rho)$.
\end{itemize}
So (ii) is verified.

Finally, we verify (iii). 
    Let $\bS$ be a totally ordered subset of $\hat\bM$ and let $\hat \rho$ be its essential infimum of $\bS$. We  need to prove that $\hat \rho \in \hat\bM$. For any $\tau \in M$ with  $\tau \geq \hat \rho$ and $\mathbb{P}(\tau>\hat \rho)>0$, it will then suffice to prove that $F(\tau)\geq \hat \rho$.
    
    If for any $\tilde \rho\in \bS$,   $\tau\ge \tilde \rho$ does not hold, then $\tau\le F(\tilde\rho)\le \tilde\rho$, which implies 
    $\tau\le \hat\rho$. This contradicts the assumption ``$\tau \geq \hat \rho$ and $\mathbb{P}(\tau>\hat \rho)>0$''. 
    So there exists a $\tilde \rho\in \bS$ such that  $\tau\ge \tilde \rho$. 
    If in addition $\mathbb{P}(\tau>\tilde\rho)>0$, 
    then $F(\tau)\ge \tilde\rho\ge \hat\rho$. Otherwise,  $\mathbb{P}(\tau>\tilde\rho)=0$, in which case 
    $\tau=\tilde \rho$,  by the assumption ``$\tau \geq \hat \rho$ and $\mathbb{P}(\tau>\hat \rho)>0$'', we know 
    $\tilde\rho\ge \hat\rho, \mathbb{P}(\tilde\rho>\hat\rho)>0$, so $F(\tilde\rho)\ge \hat\rho$, which also implies that 
    $F(\tau)\ge \hat\rho$.     

\end{proof}

\begin{theorem}\label{totally ordered}
    $\bM$ is totally ordered. 
\end{theorem}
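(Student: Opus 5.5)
The plan is to combine Lemma \ref{extreme} with Lemma \ref{extreme set}, following the classical Zorn-type argument in \cite{lang2012algebra}. Take any two elements $\tau_1,\tau_2\in\bM$. By Lemma \ref{extreme}, $\rho:=\tau_1$ is an extreme point of $\bM$. Lemma \ref{extreme set} then gives $\bM(\tau_1)=\bM$, and in particular $\tau_2\in\bM(\tau_1)$.

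Unwinding the definition of $\bM(\tau_1)$, this means that either $\tau_2\ge\tau_1$ or $F(\tau_1)\ge\tau_2$. In the first case we are done. In the second case we use the trivial bound $F(\rho)\le\rho$, which holds because $F(\rho)$ is defined as an infimum intersected with $\wedge\rho$. This gives $\tau_2\le F(\tau_1)\le\tau_1$. Either way $\tau_1$ and $\tau_2$ are comparable, almost surely, as stopping times, so $\bM$ is totally ordered.

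The substance is essentially all in the two preceding lemmas, so no real obstacle should remain. The only point needing care is that inequalities between stopping times are understood almost surely, and that the relation ``$\ge$'' used in the definitions of extreme point and of $\bM(\rho)$ is the same pointwise (a.s.) order used in the total-order claim. Given that consistency, the argument is a direct two-line consequence.
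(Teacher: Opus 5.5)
Your proposal is correct and matches the paper's proof essentially line for line. The paper fixes $x,y\in\bM$, uses Lemma \ref{extreme} to make $x$ an extreme point and Lemma \ref{extreme set} to get $y\in\bM(x)$, and concludes $y\ge x$ or $x\ge F(x)\ge y$.
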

\begin{proof}
   For $\forall x,y \in \bM$, we know $x$ is an extreme point by Lemma \ref{extreme} and $y\in \bM(x)$ by Lemma \ref{extreme set}, so $y\geq x$ or $x\geq F(x)\geq y$. Therefore, $\bM$ is totally ordered.
\end{proof}

Define $$\rho^{*}:=\rm{essinf}\, \bM.$$  

\begin{prop}\label{be-delimit}
    $\rho^{*}$ is the smallest delimiting time.
\end{prop}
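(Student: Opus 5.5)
Since $\bM$ is by definition the smallest admissible set and $\rho^{*}$ is defined as its essential infimum, the whole point is to show that $\rho^{*}$ belongs to $\bM$; minimality is then automatic. Indeed, once $\rho^*\in\bM$, every delimiting time $\rho\in\bM$ satisfies $\rho\ge \rho^{*}$ a.s. by property (i) of the essential infimum (the analogue of Theorem \ref{esssup} applied to $\{-\tau:\tau\in\bM\}$). So $\rho^{*}$ is the smallest delimiting time.

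To prove $\rho^{*}\in\bM$, I will invoke property (iii) of Definition \ref{adm-set} for the admissible set $\bM$ itself. This requires a totally ordered subset of $\bM$ whose essential infimum equals $\rho^{*}$. By Theorem \ref{totally ordered}, $\bM$ itself is totally ordered. Hence $\bM$ is a totally ordered subset of $\bM$, and (iii) yields $\operatorname{essinf}\bM=\rho^{*}\in\bM$ directly.

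If one prefers not to apply (iii) to an uncountable family, a more hands-on route is available. By Theorem \ref{esssup} applied to $\{-\tau\}$, there is a countable family $\{\tau_n\}\subset\bM$ with $\rho^{*}=\inf_n\tau_n$. Since $\bM$ is totally ordered, the finite minima $\hat\tau_n=\min_{j\le n}\tau_j$ are each equal to one of the $\tau_j$, so they lie in $\bM$. They form a decreasing chain in $\bM$, and its essential infimum is $\rho^{*}$. Applying (iii) to this countable chain again gives $\rho^{*}\in\bM$.

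The only delicate point is a technical one: total order in $\bM$ is understood a.s., so the minimum of two elements must coincide with one of them up to null sets. I also need condition (ii) of the basic assumptions on $J$, which makes $F$ and membership in $\bM$ insensitive to a.s. modifications. Both are routine, so I expect no real obstacle beyond correctly citing Theorem \ref{totally ordered}.
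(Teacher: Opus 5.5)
Your argument is correct and is the same as the paper's proof: the paper also applies condition (iii) of Definition \ref{adm-set} to $\bM$ itself, which is totally ordered by Theorem \ref{totally ordered}, to get $\rho^*\in\bM$, and then obtains $\rho^*\le\rho$ for all $\rho\in\bM$ from the definition of the essential infimum. Your countable-chain variant just restates that same step in a more hands-on way.
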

\begin{proof}
    Since $\bM$ is admissible and totally ordered, 
    we have $\rho^{*}\in \bM, \rho^{*}\leq \rho, \forall \rho \in \bM$. 
\end{proof}

By theorem \ref{totally ordered} and proposition \ref{be-delimit}, we know that $\bM$, although may not be countable,  can be sorted in a decreasing order  like $\{T, \rho^{(1)}, \rho^{(2)},...,\rho^{*}\}$. 
Hence we defined accurately the set of delimiting times and the last (minimal) delimiting time $\rho^*$. 

\subsection{Sophisticated Time}
Ideally, a sophisticated solution $\tau$ to our stopping problem should be 
both approachable and delimiting. 
In fact, it  turns out that the maximal approachable time equals the minimal delimiting time!

\begin{theorem}\label{max=min}
    $\rho^{*}=\tau_{*}$.
\end{theorem}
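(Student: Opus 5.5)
We will prove the two inequalities $\rho^*\le\tau_*$ and $\tau_*\le\rho^*$ separately.

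\emph{Step 1: $\rho^*$ is approachable, hence $\rho^*\le\tau_*$.} Since $\bM$ is admissible and $\rho^*\in\bM$ by Proposition \ref{be-delimit}, condition (ii) of Definition \ref{adm-set} gives $F(\rho^*)\in\bM$. By minimality of $\rho^*$ in $\bM$ we get $F(\rho^*)\ge\rho^*$. The inequality $F(\rho)\le\rho$ holds by definition of $F$, so $F(\rho^*)=\rho^*$. Thus $\rho^*$ is approachable, and since $\tau_*$ is the maximal approachable time, $\rho^*\le\tau_*$.

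\emph{Step 2: every delimiting time dominates every approachable time, hence $\tau_*\le\rho^*$.} Fix an approachable $\tau$, that is, $F(\tau)=\tau$, and consider the set
$$\bB_\tau:=\{\rho\in\bM:\ \rho\ge\tau\}.$$
We will show that $\bB_\tau$ is admissible. Then minimality of $\bM$ forces $\bB_\tau=\bM$, so $\rho^*\ge\tau$. Taking $\tau=\tau_*$, which is approachable by the theorem on the maximal approachable time, yields $\tau_*\le\rho^*$. We check the three conditions of Definition \ref{adm-set}:
\begin{itemize}
\item[(i)] $T\in\bB_\tau$ holds because $T\in\bM$ and $T\ge\tau$.
\item[(ii)] If $\rho\in\bB_\tau$, then $F(\rho)\in\bM$, and monotonicity of $F$ gives $F(\rho)\ge F(\tau)=\tau$. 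Hence $F(\rho)\in\bB_\tau$.
\item[(iii)] Let $\bS\subset\bB_\tau$ be totally ordered with essential infimum $\hat\rho$. Then $\hat\rho\in\bM$, since $\bS$ is also a totally ordered subset of the admissible set $\bM$. Moreover $\hat\rho\ge\tau$ a.s., because $\tau$ is an almost sure lower bound for every element of $\bS$, and the essential infimum is the largest such lower bound by the dual of Theorem \ref{esssup}.
\end{itemize}
So $\bB_\tau$ is admissible.

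The argument is essentially order-theoretic: a Knaster--Tarski type argument combined with the minimality of $\bM$. The only point needing care is step (iii) of Step 2, namely that an essential infimum of random variables each a.s.\ above $\tau$ is a.s.\ above $\tau$. This follows from the countable-subfamily representation in Theorem \ref{esssup} (applied to $-\rho$): $\hat\rho$ is the infimum of countably many elements of $\bS$, and a countable intersection of almost sure events is almost sure.

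A secondary subtlety is that the monotonicity of $F$ and the relation $F(\rho)\le\rho$ are used in the a.s.\ sense. This is consistent with property (ii) of $J$, which makes $F$ well defined on a.s.\ equivalence classes.
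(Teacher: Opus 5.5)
Your proof is correct and follows the paper's argument essentially step for step. Step 1 is the paper's fixed-point argument showing $F(\rho^*)=\rho^*$ via minimality of $\rho^*$ in $\bM$, and Step 2 is the paper's proof that $\{\rho\in\bM:\rho\ge\tau_*\}$ is admissible. The only difference is that you run Step 2 for an arbitrary approachable $\tau$. This yields Corollary \ref{acc-del-ineq} directly, and combined with the essential-supremum property it would give $\tau_*\le\rho^*$ without first knowing that $\tau_*$ is approachable.
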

\begin{proof}
    To prove $\rho^{*}=\tau_{*}$, we firstly prove that $F(\rho^{*})=\rho^{*}$. Since $\bM$ is admissible, then there must be $F(\rho^{*})\leq \rho^{*}\in \bM$. Note that $\rho^{*}$ is the essential infimum in $\bM$, so $\rho^{*}\leq F(\rho^{*})$. Hence, $F(\rho^{*})=\rho^{*}$. This shows that $\rho^{*}\leq \tau_{*}$.

   To  prove that $\tau_{*}\leq \rho^{*}$, we consider the set $\Gamma:=\{\rho: \rho \in \bM, \rho \geq \tau_{*} \}$. We claim that $\Gamma=\bM$. To prove the claim, it suffices to show that $\Gamma$ is admissible. (i) in Definition \ref{adm-set} is trivially true. Note that $\forall \rho \in \Gamma$, $F(\rho)\geq F(\tau_*)=\tau_*$. Hence (ii)  is verified. Let $\bS$ be a totally ordered subset of $\Gamma$ and denote $\hat \rho$ as its essential infimum. Clearly, $\hat \rho \in \bM, \hat \rho \geq \tau^{*}$. Hence (iii) is verified.

   Therefore, $\rho^{*}\in \Gamma$ and so we have $\rho^{*}\geq \tau_{*}$.   
\end{proof}

By Theorem \ref{max=min}, we get the following relation between approachable times and delimiting times. 
\begin{coro}\label{acc-del-ineq}
For any approachable time $\tau$ and any delimiting time $\rho$, we have $\tau\le \rho$.
\end{coro}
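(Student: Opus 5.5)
The plan is to chain the two extremal characterisations already established. First I invoke the maximality of $\tau_*$: since $\tau_*$ is the essential supremum of all approachable times, the earlier theorem showing that $\tau_*$ is itself the maximal approachable time gives $\tau\le\tau_*$ a.s. for every approachable time $\tau$. Second I invoke Proposition \ref{be-delimit}, which says that $\rho^*$ is the smallest delimiting time, so $\rho^*\le\rho$ for every $\rho\in\bM$. Third, Theorem \ref{max=min} identifies $\tau_*=\rho^*$. Putting these together gives
$$\tau\le\tau_*=\rho^*\le\rho,$$
which is exactly the claim.

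The only point needing care is the second step. Proposition \ref{be-delimit} relies on $\rho^*$ lying in $\bM$ and being a lower bound of all of $\bM$. That in turn rests on Theorem \ref{totally ordered}, because property (iii) of admissibility only guarantees that the essential infimum of a \emph{totally ordered} subset lies in $\bM$. Since Theorem \ref{totally ordered} shows $\bM$ itself is totally ordered, we may apply (iii) to all of $\bM$. This places $\rho^*\in\bM$, and by the definition of essential infimum $\rho^*\le\rho$ a.s. for each $\rho\in\bM$. All inequalities are understood almost surely, consistent with the conventions used throughout.

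As a sanity check, there is also a direct route that avoids Theorem \ref{max=min}. Fix an approachable $\tau$ and consider $\Gamma_\tau:=\{\rho\in\bM:\rho\ge\tau\}$, then show it is admissible by the argument in the proof of Theorem \ref{max=min}:
\begin{itemize}
\item $T\in\Gamma_\tau$ trivially;
\item if $\rho\in\Gamma_\tau$, monotonicity of $F$ gives $F(\rho)\ge F(\tau)=\tau$;
\item essential infima of totally ordered subfamilies bounded below by $\tau$ remain $\ge\tau$ and stay in $\bM$.
\end{itemize}
Minimality of $\bM$ then forces $\Gamma_\tau=\bM$. I expect no real obstacle: the corollary is an immediate consequence of Theorem \ref{max=min} together with the extremal properties of $\tau_*$ and $\rho^*$.
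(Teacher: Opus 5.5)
Your proof is correct and is exactly the paper's argument: the paper reads the corollary off Theorem \ref{max=min} via $\tau\le\tau_*=\rho^*\le\rho$. Your alternative route through $\Gamma_\tau$ is also valid. It is the second half of the proof of Theorem \ref{max=min} with $\tau_*$ replaced by an arbitrary approachable $\tau$, and it has the small advantage of not needing the essential-supremum construction or the total ordering of $\bM$.
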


Theorem \ref{max=min} also shows that $\tau_*=\rho^*$ is the only stopping time which is both approachable and delimiting, hence it is an ideal candidate of the sophisticated solution. 

\begin{MyDef}[Sophisticated Time] \label{equ-solution}
$\tau_*=\rho^*$ is called the sophisticated time. 
\end{MyDef}

\subsection{Backward Iteration}

The definition of sophisticated time provides two ways to find the equilibrium solution. One is the forward way to find the largest approachable time $\tau_*$, and the other is the backward way to find the smallest delimiting time $\rho^*$. 

We have no  general algorithm to find  the largest approachable time if there is no further mathematical structure of the problem. For the smallest delimiting time, since all $n^{th}$-order naive times $\rho^{(n)}$ are delimiting, so is $\rho^{(\infty)}$. 
When $\rho^{(\infty)}$ is approachable, we can find it by the backward iteration used in the definition of $n^{th}$-order naive times. 


\begin{theorem}\label{suff-soph}
If $\rho^{(\infty)}$ is approachable, then $\rho^{(\infty)}$ is the sophisticated time, i.e., $\rho^{(\infty)}=\tau_*=\rho^*$.
\end{theorem}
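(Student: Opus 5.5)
The plan is to show that $\rho^{(\infty)}$ is both delimiting and approachable, and then use Corollary \ref{acc-del-ineq} together with Theorem \ref{max=min} to conclude.

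\emph{Step 1: each $\rho^{(n)}$ is delimiting.} Induct on $n$. First, $\rho^{(0)}=T$ lies in $\bM$ by property (i) of Definition \ref{adm-set}, since $\bM$ is itself admissible. If $\rho^{(n-1)}\in\bM$, then $\rho^{(n)}=F(\rho^{(n-1)})\in\bM$ by property (ii).

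\emph{Step 2: $\rho^{(\infty)}$ is delimiting.} By monotonicity of $F$ and the fact that $F(\rho)\le\rho$, the sequence $\rho^{(n)}$ is decreasing. Hence the family $\{\rho^{(n)}\}_{n\ge 0}$ is a totally ordered subset of $\bM$. Its essential infimum equals the pointwise limit $\rho^{(\infty)}=\lim_n\rho^{(n)}$, because this is a countable decreasing family. Property (iii) of Definition \ref{adm-set} then gives $\rho^{(\infty)}\in\bM$. The only point needing a line of care is this identification of the essential infimum with the pointwise limit, which follows directly from Theorem \ref{esssup} since the family is countable and monotone.

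\emph{Step 3: conclusion.} By hypothesis, $\rho^{(\infty)}$ is approachable, so $\rho^{(\infty)}\le\tau_*$ by maximality of $\tau_*$. Since $\rho^{(\infty)}$ is delimiting, $\rho^*\le\rho^{(\infty)}$ by Proposition \ref{be-delimit}. Theorem \ref{max=min} gives $\tau_*=\rho^*$, and therefore
$$\tau_*=\rho^*\le\rho^{(\infty)}\le\tau_*.$$
This forces $\rho^{(\infty)}=\tau_*=\rho^*$. Alternatively, Corollary \ref{acc-del-ineq} applied with $\tau=\tau_*$ and $\rho=\rho^{(\infty)}$ yields the same sandwich.

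None of these steps presents a real obstacle. The argument is essentially bookkeeping on the admissibility axioms, and the essential-infimum identification in Step 2 is the only place where any care is needed.
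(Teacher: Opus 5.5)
Your proposal is correct and follows the same reasoning the paper relies on. The paper gives no separate proof of this theorem; it justifies it only by remarking that every $\rho^{(n)}$ is delimiting, hence so is $\rho^{(\infty)}$, and by using Theorem \ref{max=min} and Corollary \ref{acc-del-ineq}, which make $\tau_*=\rho^*$ the unique time that is both approachable and delimiting. Your Step 2 supplies the detail the paper leaves implicit, applying property (iii) of Definition \ref{adm-set} to the countable decreasing chain $\{\rho^{(n)}\}$, and it does so exactly right.
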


The condition in Theorem \ref{suff-soph} is quite hard to verify. While it holds trivially if there exists an integer $n$ such that 
$\rho^{(n+1)}=\rho^{(n)}$, in which case $\rho^{(\infty)}=\rho^{(n)}$. Furthermore, we have a nontrivial sufficient condition for $\rho^{(\infty)}$ to be approachable. 

\begin{theorem}\label{infinty-work}
Given that $J(\cdot; t)$ is lower semi-continuous in the right side for any $t$, in the sene that $\lim_{n\rightarrow \infty}J(\tau_n;t)\le J(\tau; t)$ for any decreasing sequence $\tau_n\downarrow \tau$. If for any integer $n$ and any $t< \rho^{(n )}$, there exists a $\tilde \rho \in\ST(t)$ with $\rho^{(n)}\le \tilde \rho\le \rho^{(n-1)}$, such that $J(\tilde \rho;t)\ge J(t;t)$, then $\rho^{(\infty)}$ is approachable. 
\end{theorem}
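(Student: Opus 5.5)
We need to show $F(\rho^{(\infty)})=\rho^{(\infty)}$. Since $F(\rho)\le\rho$ always, it suffices to show $F(\rho^{(\infty)})\ge\rho^{(\infty)}$. Unwinding Definition \ref{stopping time mapping}, this means: for every $t<\rho^{(\infty)}$ (pathwise, on the event where this holds) there is a stopping time $\tau$ with $t<\tau\le\rho^{(\infty)}$ and $J(\tau;t)\ge J(t;t)$. Under the ``stop later'' convention, such a $\tau$ rules out $t$ from the set whose infimum defines $F(\rho^{(\infty)})$. The natural candidate is $\tau=\rho^{(\infty)}$ itself, and the plan is to obtain it as a limit of the stopping times supplied by the hypothesis.

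Fix $t$ and work on the event $\{t<\rho^{(\infty)}\}$. Since $\rho^{(n)}\downarrow\rho^{(\infty)}$, we have $t<\rho^{(n)}$ for every $n$. The hypothesis then gives, for each $n$, a stopping time $\tilde\rho_n\in\ST(t)$ with
$$\rho^{(n)}\le\tilde\rho_n\le\rho^{(n-1)},\qquad J(\tilde\rho_n;t)\ge J(t;t).$$
By the sandwich $\rho^{(n)}\le\tilde\rho_n\le\rho^{(n-1)}$, the sequence $\tilde\rho_n$ is automatically decreasing: indeed $\tilde\rho_{n+1}\le\rho^{(n)}\le\tilde\rho_n$. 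Moreover $\tilde\rho_n\downarrow\rho^{(\infty)}$. Applying right lower semicontinuity of $J(\cdot;t)$ along this decreasing sequence gives
$$J(\rho^{(\infty)};t)\ge\lim_{n}J(\tilde\rho_n;t)\ge J(t;t).$$
Strictly, one may need $\liminf$ in place of $\lim$; I would read the assumption as applying to the liminf, or restrict to a subsequence along which the limit exists. Since $t<\rho^{(\infty)}\le\rho^{(\infty)}$, the stopping time $\tau=\rho^{(\infty)}$ witnesses that immediate stopping at $t$ is not strictly better than all $\tau\in(t,\rho^{(\infty)}]$. Hence $t$ is not in the set defining $F(\rho^{(\infty)})$.

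Since this holds for every $t<\rho^{(\infty)}$, the set $\{t<\rho^{(\infty)}: J(t;t)>J(\tau;t)\ \forall\,\rho^{(\infty)}\ge\tau>t\}$ is empty. Its infimum is therefore $+\infty$ or at least $\ge\rho^{(\infty)}$, so $F(\rho^{(\infty)})=\rho^{(\infty)}$. Combined with Theorem \ref{suff-soph}, this also identifies $\rho^{(\infty)}$ as the sophisticated time.

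The main obstacle is measure-theoretic. The condition ``for any $t<\rho^{(n)}$'' is an $\omega$-dependent statement, while $J$ is defined on stopping times in $\ST(t)$ for deterministic $t$. I would handle this by treating $t$ deterministic and working on $\{t<\rho^{(\infty)}\}\in\cF_t$, with all inequalities holding a.s. on that event. For a general (possibly uncountable) $\TS$, one must also avoid taking a union of uncountably many null sets, since $F$ is defined pathwise. I expect to argue, as the definition of $F$ implicitly does, that the defining comparisons are understood pointwise in $t$ with a.s. inequalities on $\cF_t$-events. If needed, I would invoke right continuity of the filtration as in Lemma \ref{cont-J-0}, reducing to rational $t$, or else simply adopt the paper's pathwise convention.
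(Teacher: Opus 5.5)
Your proposal is correct and is essentially the paper's own argument: for each $n$ take $\tilde\rho_n$ from the hypothesis, note that the sandwich $\rho^{(n)}\le\tilde\rho_n\le\rho^{(n-1)}$ forces $\tilde\rho_n\downarrow\rho^{(\infty)}$, and use right lower semicontinuity to get $J(\rho^{(\infty)};t)\ge J(t;t)$ for every $t<\rho^{(\infty)}$, hence $F(\rho^{(\infty)})=\rho^{(\infty)}$. You state the sandwich with the correct orientation (the paper's proof writes it reversed) and make explicit why $\tilde\rho_n$ is decreasing. The $\liminf$ reading of the semicontinuity assumption is the right one to adopt; your subsequence alternative is less safe, since a single subsequence with an a.s.\ limit need not exist. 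Your measure-theoretic caveats go beyond what the paper addresses but do not change the argument.
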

\begin{proof}
Fix any $t<\rho^{(\infty)}$. For any $n$, suppose $\tilde\rho_n$ is a stopping time with $\rho^{(n)}\ge \tilde\rho_n\ge \rho^{(n-1)}$ such that $J(\tilde \rho_n;t)\ge J(t;t)$,  then $\tilde\rho_n\downarrow \rho^{(\infty)}$, hence $J(\rho^{(\infty)};t)\ge \lim_{n\rightarrow \infty}J(\tilde\rho_n;t)\ge J(t;t)$, which implies that $\rho^{(\infty)}$ is approachable.
\end{proof}
\begin{re}
Notice that the statement in Theorem \ref{infinty-work} only works for the maximal version of the solution. If we choose the ``stop earlier'' principle, then the statement in this Theorem does not hold.
\end{re}

\section{Examples for the Backward Iteration} \label{sec6} 

In Section \ref{sec3}, we proposed the  $n^{th}$-order naive time $\rho^{(n)}$ by backward induction, and hoped that their limit 
$\rho^{(\infty)}$ is approachable and hence is a sophisticated solution.
Unfortunately, 
$\rho^{(\infty)}$ may or may not be approachable. 
In this section, we will show that both are possible by examples.  

\subsection{A Counter Example}
We start from a counter example, in which $\rho^{(\infty)}$ is not the sophisticated solution. 
\begin{Ex}\label{counter-jump}
Let ${\mathbb T}=[0,3]$. For any integer $k$, denote $t_k=1+\frac{1}{k}$. The flow of preferences $J$ is given as follows.
\begin{itemize}
\item [(i)] For any $t\ge 2$, $J(\tau;t)=2-\E[\tau]$.
\item [(ii)] For any integer $k\ge 1$, any $t\in [t_{k+1}, t_k)$, $J(\tau;t)=\E[|\tau-t_{k}|\id_{\tau\le t_k}]+\E[\tau\id_{\tau>t_k}]$.
\item [(iii)] For any $t\in [0, 1]$,  $J(\tau;t)=\E[(1-|\tau-\frac{1}{2}|)\id_{\tau\le 1}]+\E[\tau\id_{\tau>1}]$.
\end{itemize}
 \end{Ex}
 
Firstly, for any $t<1$, any stopping time $\tau>1$, it is easy to check that $J(t;t)=1-|t-1/2|<1/2$, while $J(\tau;t)=\E[\tau]>1$, hence $J(\tau;t)>J(t;t)$.

Let us calculate $\rho^{(n)}$ in this example step by step. 

\begin{itemize}
\item For $n=1$,  $t_n=t_1=2$. It is easy to check that $J(t_1;t_1)>J(\tau;t_1)$ for any stopping time $\tau>t_1$. So $\rho^{(1)}\le t_1$. 
For any $t\in [t_{n+1}, t_{n})$ for some integer $n\ge 1$,  $J(t;t)=t_n-t<t_n-t_{n+1}$, while $J(3;t)=3>J(t;t)$. Furthermore, 
  for any $t\le 1$, $J(t;t)<J(3;t)$.  Hence $\rho^{(1)}=t_1$.

\item For $n=2, t_2=\frac{3}{2}$. It is easy to check that $J(t_2;t_2)=t_1-t_2$, while  for any stopping time $\tau$ with 
$ t_2<\tau\le t_1$,
$J(\tau;t_2)=\E[|\tau-t_1|]=t_1-\E[\tau]<t_1-t_2$; so $\rho^{(2)}\le t_2$. 
For any $t\in [t_{m+1},t_m)$ with $m>n$, $J(t;t)=t_m-t<t_m-t_{m+1}$, while $J(t_1;t)=t_1>J(t;t)$. 
Furthermore,  for any $t<1$, $J(t;t)<J(t_1;t)$.  Hence $\rho^{(2)}=t_2$.

\item We can see that the previous analysis applies for $n=3,4,\cdots$, and by induction, we can prove that $\rho^{(n)}=t_n$. 
Hence $\rho^{(\infty)}=1$. 
\end{itemize}

It is easy to check that, at time $t=1/2$, $J(t;t)=1$; and for any stopping time $\tau$ with $t<\tau\le 1$, 
$J(\tau;t)=1+1/2-\E[\tau]<1$. So $J(t;t)>J(\tau;t)$ for any stopping time $\tau$ with $t<\tau\le 1$. 
Which means that $\rho^{(\infty)}=1$ is not approachable, and hence $\tau_*<\rho^{(\infty)}$!

\subsection{Non-exponential Discounting}\label{sec62}
In this subsection, we will stay in a system over the continuous time setting ${\mathbb T}=[0, +\infty)$. 
The $d$-dimensional state process $X_\cdot$ of the system is given by the stochastic differential equation 
\begin{equation}\label{system}
dX_{s}=b(s,X_{s})ds+\sigma(s,X_{s})dB_{s},\qquad X_{0}=x_0,
\end{equation}
where $x_0$ is a fixed initial state  at time $t=0$,
 $b(\cdot,\cdot)$ and $\sigma(\cdot,\cdot)$ are given deterministic functions 
$B_\cdot$ is a standard Brownian motion. The filtration of the system is  
the augmented natural filtration of $B_\cdot$, i.e., $\cF_t=\cF^B_t\bigvee \mathcal{N}(\PP)$, which satisfies the usual condition, i.e., it is right continuous and universally complete. 

For any $t\in {\mathbb T}$, the preference at time $t$ is to maximise $$J(\tau; t)=\E_t[D(\tau-t)g(X_\tau)]$$ 
over $\tau\in \ST(t),$
where $\E_t=\E[\cdot|\cF_t]$, 
 $D: \R^+\mapsto (0,1]$ 
is the discounting function, which is  decreasing and $D(0)=1$, 
and $g: \R^d\mapsto \R^+$ is the reward function. 

In this subsection, we keep the following assumptions on the parameters of the problem.
\begin{assump}\label{con-hd}
\begin{itemize}
	\item [(i)] The SDE (\ref{system}) admits a unique strong solution, which is strongly Markovian. 
	\item [(ii)] $g(\cdot)$ is nonnegative  and continuous. 	
	\item [(iii)] $D(t)D(s)\le D(t+s)$ for any $s\ge 0$ and $ t\ge 0$. 
	\item [(iv)] $Y_t:=D(t)g(X_t)$ is a process of class ${\mathbf D}$, i.e., $\{Y_\tau\}_{\tau\in \ST(0)}$ is uniformly integrable.
\end{itemize}
\end{assump}

\begin{re}
In this assumption, (i) and (ii) are very mild and often assumed in classical optimal stopping problems;
(iii) is a critical condition, which is usually called decreasing impatience in literature on non-exponential discounting; 
and (iv) is used to ensure $J(\tau;t)$ to be  right continuous in $\tau$, see the next Lemma.
\end{re}

\begin{lemma}\label{cont-J}
Given Assumption (\ref{con-hd}).(iv), $J(\tau;t)$ is continuous in $\tau$, i.e., If $\{\tau_n\}_{n=1,2,\cdots}$ is a convergent sequence of stopping times, $\tau=\lim_{n\rightarrow +\infty}\tau_n$,   then $J(\tau;t)=\lim_{n\rightarrow +\infty}J(\tau_n;t)$.
\end{lemma}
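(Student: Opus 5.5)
The plan is to combine pathwise convergence of the discounted reward with uniform integrability. Let $\tau_n\to\tau$ a.s. be stopping times in $\ST(t)$, with $t$ fixed. Since $\tau_n\ge t$, we may write
$$D(\tau_n-t)g(X_{\tau_n}) = \frac{D(\tau_n-t)}{D(\tau_n)}\,Y_{\tau_n},$$
which requires $D>0$; this holds because $D$ maps into $(0,1]$. By Assumption \ref{con-hd}(iii), $D(t)D(\tau_n-t)\le D(\tau_n)$, so
$$0\le D(\tau_n-t)g(X_{\tau_n})\le \frac{Y_{\tau_n}}{D(t)}.$$
The family $\{Y_{\tau_n}\}$ is uniformly integrable by Assumption \ref{con-hd}(iv), and $1/D(t)$ is a deterministic constant. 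Hence $\{D(\tau_n-t)g(X_{\tau_n})\}_n$ is uniformly integrable, being dominated in absolute value by a uniformly integrable family. This domination step is the key use of decreasing impatience.

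Next I will establish almost sure convergence
$$D(\tau_n-t)g(X_{\tau_n})\to D(\tau-t)g(X_\tau).$$
The strong solution $X$ has continuous paths, so $X_{\tau_n}\to X_\tau$ a.s. on $\{\tau<\infty\}$, and continuity of $g$ (Assumption \ref{con-hd}(ii)) gives $g(X_{\tau_n})\to g(X_\tau)$. The discount factor needs more care. $D$ is monotone, but it need not be continuous, so $D(\tau_n-t)\to D(\tau-t)$ may fail at jump points of $D$. I expect this to be the main obstacle. My first attempt is to deduce continuity of $D$ on $(0,\infty)$ from (iii) together with monotonicity. Indeed $D(s)D(h)\le D(s+h)\le D(s)$, and if $D(h)\to1$ as $h\downarrow0$ this gives right continuity. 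Left continuity would need a similar argument, or else an implicit standing assumption, which I would state explicitly. If $D$ is not right-continuous at $0$, I will argue on the event $\{\tau=t\}$ separately or assume continuity of $D$.

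On $\{\tau=\infty\}$ (recall $\TS=[0,\infty)$), we are working with uniformly integrable class $\mathbf D$ processes, and the value at infinity is interpreted as the a.s. limit of $Y$. I will handle this event via the convention $D(\infty)g(X_\infty):=\lim$, or simply restrict to finite limits.

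Once uniform integrability and a.s. convergence are in hand, Vitali's convergence theorem gives $L^1$ convergence. Conditional expectation $\E_t$ is an $L^1$ contraction, so $J(\tau_n;t)\to J(\tau;t)$ in $L^1$. This is the sense in which the statement's limit is to be understood. Almost sure convergence of a subsequence also follows, and for monotone sequences $\tau_n$, the case needed in Theorem \ref{infinty-work}, a.s. convergence of the full sequence can be obtained from the conditional dominated convergence theorem applied with the uniformly integrable bound.
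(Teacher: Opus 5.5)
Your argument is essentially the paper's: it also uses (iii) to get $Z_s:=D(s-t)g(X_s)\le Y_s/D(t)$, so $Z$ inherits class $\mathbf{D}$ from $Y$, and it then exchanges $\lim$ and $\E_t$. Two of your side remarks need correcting, both on points the paper's proof passes over silently.

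\textbf{Continuity of $D$.} The paper's step $\E_t[Z_\tau]=\E_t[\lim_n Z_{\tau_n}]$ uses continuity of $D$ tacitly, and without it the lemma is false. Take quasi-hyperbolic discounting: $D(0)=1$ and $D(s)=\beta e^{-\delta s}$ for $s>0$, with $0<\beta<1$, together with $g\equiv 1$. This satisfies all of Assumption \ref{con-hd}. Yet $\tau_n=t+1/n$ gives $J(\tau_n;t)=\beta e^{-\delta/n}\to\beta\neq 1=J(t;t)$. So your fallback of ``arguing on $\{\tau=t\}$ separately'' cannot work; you need $D(0+)=1$ as a standing hypothesis. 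That hypothesis is also sufficient, and it settles your left-continuity worry. From (iii) and monotonicity you get both $D(s)D(h)\le D(s+h)\le D(s)$ and $D(s)\le D(s-h)\le D(s)/D(h)$. Hence $D$ is continuous on $[0,\infty)$ once $D(h)\to 1$.

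\textbf{Mode of convergence.} Your $L^1$ conclusion, via Vitali plus the contraction property of $\E_t$, is correct. Your final sentence, however, is not. Conditional dominated convergence needs an integrable dominating variable, and class $\mathbf{D}$ gives only uniform integrability, not $\sup_n Z_{\tau_n}\in L^1$. Monotonicity of $\tau_n$ does not help either, since it does not make $Z_{\tau_n}$ monotone. In general, a.s.\ convergence plus uniform integrability does not give a.s.\ convergence of conditional expectations. The paper's own interchange of $\lim$ and $\E_t$ has the same gap if it is read almost surely.

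You do not need the a.s.\ claim. In Theorem \ref{infinty-work} and Proposition \ref{opt-rho-inf}, the lemma is only used to pass a.s.\ inequalities such as $J(\tilde\rho_n;t)\ge J(t;t)$ to the limit. An a.s.-convergent subsequence of your $L^1$-convergent sequence does that. So drop that sentence and use the subsequence argument instead.
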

\begin{proof}
Fix $t$. For any $s\ge t$, denote $Z_s:=D(s-t)g(X_s)$. Since $D(s-t)\le \frac{D(s)}{D(t)}$, hence 
$Z_s\le \frac{1}{D(t)}[D(s)g(X_s)]=\frac{Y_s}{D(t)}$. With $Y$ being in class ${\mathbb D}$, so is $Z$, 
hence for any convergent sequence $\{\tau_n\}$ with  $\tau=\lim_{n\rightarrow +\infty}\tau_n$,  
\begin{eqnarray*}
J(\tau;t)&=&\E_t[Z_{\tau}]\\
&=&\E_t[\lim_{n\rightarrow +\infty}Z_{\tau_n}]\\
&=&\lim_{n\rightarrow +\infty}\E_t[Z_{\tau_n}]\\
&=&\lim_{n\rightarrow +\infty}J(\tau_n;t).
\end{eqnarray*}
\end{proof}

As proposed in Section \ref{sec3}, we find the sophisticated solution by $n^{th}$-order naive times. Denote $\rho^{(0)}=+\infty$, then for any $n\ge 1$, the $n^{th}$-order naive time $\rho^{(n)}$ is calculated inductively by 
$$\rho^{(n)}=\inf\{t\ge 0: t\le \rho^{(n-1)}, \mbox{ and } g(X_t)>\E_t[D(\tau-t)g(X_\tau)] \mbox{ for }\forall \,\tau\in \ST(t)\ \mbox{ with } t<\tau\le \rho^{(n-1)} \}.$$
By  Lemma \ref{cont-J-0} and its proof, $\rho^{(n)}$ is a stopping time for any integer $n$.  Denote $\rho^{(\infty)}=\lim_{n\rightarrow+\infty}\rho^{(n)}$. 

To study the sequence of high order strategies, we need some properties of pre-committed optimal stopping problem involved 
in the calculation of $\rho^{(n)}$. 
For any stopping time $\tau_0$, any stopping time $\bar\tau>\tau_0$, consider the optimal stopping problem 
\begin{equation}\label{opt-tau}
\max_{\tau\in \ST(\tau_0), \tau\le \bar\tau} \E_{\tau_0}[D(\tau-\tau_0)g(X_\tau)].
\end{equation}
 
\begin{lemma}\label{exist-opt-tau}
With Assumption (\ref{con-hd}).(iv), Problem (\ref{opt-tau}) admits optimal stopping times. 
\end{lemma}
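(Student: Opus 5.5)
We need existence of an optimal stopping time for $\max_{\tau\in\ST(\tau_0),\tau\le\bar\tau}\E_{\tau_0}[D(\tau-\tau_0)g(X_\tau)]$. The plan is the classical Snell envelope argument, using that the reward process is continuous and of class $\mathbf D$.

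\textbf{Step 1: the reward process.} Define $Z_s:=D((s-\tau_0)^+)g(X_s)$ for $s\ge \tau_0$, stopped at $\bar\tau$, i.e. consider $Z_{s\wedge\bar\tau}$. Exactly as in the proof of Lemma \ref{cont-J}, using Assumption \ref{con-hd}(iii) pathwise with $t=\tau_0(\omega)$, we get $Z_s\le Y_s/D(\tau_0)$. On $\{\tau_0\le N\}$ we have $D(\tau_0)\ge D(N)>0$, so $Z$ is of class $\mathbf D$ on each such event. Since the problem is $\cF_{\tau_0}$-conditional, it can be solved separately on $\{N-1<\tau_0\le N\}$ and the solutions pasted. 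Next we check regularity. $g$ is continuous and $X$ has continuous paths. $D$ is monotone, so it has left and right limits, and it is right continuous in the interior: for $s>0$, (iii) gives $D(s+h)\ge D(s)D(h)$, and we need $D(h)\to1$. Since only upper semicontinuity along stopping times is actually needed, a cleaner route is to replace $D$ by its upper-semicontinuous (right-continuous, since $D$ is decreasing) version. I will instead rely on Lemma \ref{cont-J}, which already asserts $J(\cdot;t)$ is continuous along convergent sequences of stopping times. This is exactly the property the classical theory needs: the reward is continuous in expectation along sequences, i.e. regular.

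\textbf{Step 2: maximizing sequence and compactness.} Let $V:=\operatorname{esssup}_{\tau}\E_{\tau_0}[Z_\tau]$ over admissible $\tau$. The family $\{\E_{\tau_0}[Z_\tau]\}$ is upward directed: given $\tau_1,\tau_2$, set $A=\{\E_{\tau_0}Z_{\tau_1}\ge \E_{\tau_0}Z_{\tau_2}\}\in\cF_{\tau_0}$ and $\tau=\tau_1\id_A+\tau_2\id_{A^c}$, which is admissible. By Theorem \ref{esssup} there is a sequence $\tau_n$ with $\E_{\tau_0}Z_{\tau_n}\uparrow V$.

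\textbf{Step 3: the optimizer.} Rather than extracting a convergent subsequence, which is not available for stopping times in general, I use the Snell envelope. Let $U$ be the right-continuous supermartingale Snell envelope of $Z_{\cdot\wedge\bar\tau}$ on $[\tau_0,\bar\tau]$; it exists because $Z$ is of class $\mathbf D$ and the filtration satisfies the usual conditions. Set
$$\tau^*:=\inf\{s\ge\tau_0: U_s=Z_s\}\wedge\bar\tau.$$
By the standard theorem (El Karoui; Peskir--Shiryaev \cite{peskir2006optimal}), if $Z$ is of class $\mathbf D$ and upper semicontinuous in expectation, i.e. $\limsup_n\E Z_{\sigma_n}\le \E Z_\sigma$ for $\sigma_n\to\sigma$, then $\tau^*$ is optimal. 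This semicontinuity is provided by Lemma \ref{cont-J}.

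\textbf{Main obstacle.} The conditioning on $\tau_0$ and the factor $D(\tau-\tau_0)$ depend on the random starting time, so Lemma \ref{cont-J}, stated for deterministic $t$, must be extended to $\tau_0$. I would first handle $\tau_0$ taking countably many values by pasting over $\{\tau_0=t_i\}$, and then extend to general $\tau_0$ by rewriting the reward as the adapted process $s\mapsto D(s-\tau_0)g(X_s)\id_{s\ge\tau_0}$. This process is still of class $\mathbf D$ locally, and the dominated-convergence argument of Lemma \ref{cont-J} carries over verbatim.
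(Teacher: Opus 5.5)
Your argument is correct, modulo tacit hypotheses that the paper shares, but it follows a different route from the paper's proof.

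\textbf{The paper's route.} The paper uses the strong Markov property to recast (\ref{opt-tau}) as a problem started at time $0$ from $X_0=X_{\tau_0}$. It encodes the constraint through the penalised reward $\hat R_t=R_t\id_{t\le\tilde\tau}-\id_{t>\tilde\tau}$, which is left-continuous and hence regular. It then invokes the Pennanen--Perkki\"o existence theorem for regular class-$\mathbf D$ rewards, and checks afterwards that an optimiser cannot exceed $\tilde\tau$.

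\textbf{Your route.} You stay in the original filtered space. You freeze the reward at $\bar\tau$ instead of penalising. You obtain class $\mathbf D$ from $Z_s\le Y_s/D(\tau_0)$ by localising on $\{N-1<\tau_0\le N\}\in\cF_{\tau_0}$ and pasting. You then apply El Karoui's Snell-envelope theorem, with continuity in expectation taken from an extension of Lemma \ref{cont-J}.

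\textbf{What each buys.} Your route does not need the Markov property at all. It also avoids a point the paper glosses over: after the Markov shift, one needs class $\mathbf D$ for the process restarted from the random state $X_{\tau_0}$ with time-shifted coefficients, whereas Assumption \ref{con-hd}(iv) only gives it for the path started from $x_0$ at time $0$. You also get an explicit optimiser, namely the first time the Snell envelope meets the reward. The paper's route buys brevity through a single citation.

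\textbf{Small remarks.} Step 2 is never used. Your aside that the upper semicontinuous modification of a decreasing $D$ is right-continuous is backwards: upper semicontinuity of $s\mapsto D(s-\tau_0)g(X_s)$ forces left-continuity of $D$. Since you discard that remark, neither point affects the argument.

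\textbf{Shared caveats.} Both arguments rely on continuity of $D$. The paper asserts the reward is ``continuous in time'', and you go through Lemma \ref{cont-J}. For decreasing $D$ under (iii), continuity is equivalent to $D(0+)=1$, which is not among the stated assumptions. Neither argument addresses the event $\{\bar\tau=+\infty\}$, where your hitting time may be infinite and a terminal value $Z_\infty$ must be specified.
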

\begin{proof}
Pennanen and Ari-Pekka \cite{pennanen2018} studied the existence of optimal solution for the optimal stopping problem 
$\max_{\tau\in\ST(0)} \E[R_\tau]$ with a given stochastic reward process $R$. They call the process $R$ regular
 if it is of class $\mathbf{D}$ and its left-continuous version $R_{-}$ is indistinguishable with its predictable projection $^p\hspace{-0.1cm} R$.
 Given the filtration $\cF_\cdot$ satisfying the usual condition, they proved that the optimal stopping problem admits optimal solutions (Theorem 4 in \cite{pennanen2018}).
 
For our Problem (\ref{opt-tau}), by the strong Markovian property of the state process $X_\cdot$, we know the problem is equivalent to 
\begin{equation}\label{exist-opt-equiv}
\max_{\tau\in \hat\ST(0), \tau\le \tilde\tau} \E[D(\tau)g(X_\tau)|X_0=x]
\end{equation}
  with 
$x=X_{\tau_0}$ and $\tilde \tau$ representing the shift of $\bar\tau$ due to the initial condition  $X_0=x$. 

For Problem (\ref{exist-opt-equiv}),  the filtration is assumed to satisfy the usual condition. $R_s=D(s)g(X_s)$ is of class $\mathbf{D}$, nonnegative,  and continuous in time. 
Furthermore, if we define $\hat R_t=R_{t}\id_{t\le \tilde\tau}-\id_{t>\tilde \tau}$, then $\hat R_t$ is left-continuous, implying that $\hat R$ is regular. Hence 
the problem $\max_{\tau\in\ST(\tau_0)} \E[\hat R_\tau]$ admits optimal solutions. Denote by $\tau^*$ an optimal solution, then 
$\tau^*\le \tilde \tau$, otherwise, over the event $\tau^*>\tilde\tau$, $\hat R_{\tau^*}=-1<\hat R_{\tilde\tau}$, and 
$\tau^*\wedge \bar\tau$ is strictly better than $\tau^*$, which contradicts the optimality of $\tau^*$. So $\tau^*$ is also an optimal stopping time for Problem (\ref{exist-opt-equiv}).
\end{proof}

With this lemma, the following corollary is obvious.
\begin{coro}\label{pre-com-exists}
For any integer $n$ and any stopping time $\tau_0$ with $\tau_0\le \rho^{(n)}$,  the optimal stopping problem 
\begin{equation}\label{pre-opt-n}
\max_{\tau\in \ST(\tau_0), \tau\le \rho^{(n)}} \E_t[D(\tau-\tau_0)g(X_\tau)]
\end{equation}
 admits  pre-committed optimal solutions.
\end{coro}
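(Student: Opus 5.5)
The claim follows from Lemma \ref{exist-opt-tau} by taking $\bar\tau=\rho^{(n)}$. We only need to check that the hypotheses of that lemma hold, with the small adjustment that $\tau_0$ may coincide with $\rho^{(n)}$ on some event.

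The first step is to fix $n$ and a stopping time $\tau_0\le\rho^{(n)}$. By Lemma \ref{cont-J-0}, $\rho^{(n)}$ is a stopping time, so the feasible set $\{\tau\in\ST(\tau_0):\tau\le\rho^{(n)}\}$ is well defined. It is nonempty, since it contains both $\tau_0$ and $\rho^{(n)}$. We read the conditional expectation in (\ref{pre-opt-n}) as $\E_{\tau_0}$, matching (\ref{opt-tau}).

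Lemma \ref{exist-opt-tau} is stated for $\bar\tau>\tau_0$, so the case split is as follows. Let $A=\{\tau_0=\rho^{(n)}\}\in\cF_{\tau_0}$. On $A$ the only feasible choice is $\tau=\tau_0$. On $A^c$ the pair $(\tau_0,\rho^{(n)})$ satisfies the strict inequality.

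Rather than patch the two pieces together, we rerun the argument of Lemma \ref{exist-opt-tau} directly with $\bar\tau=\rho^{(n)}$. Set $\hat R_s=D(s-\tau_0)g(X_s)\id_{s\le \rho^{(n)}}-\id_{s>\rho^{(n)}}$ for $s\ge\tau_0$. We check three properties.
\begin{itemize}
\item $\hat R$ is of class $\mathbf D$. By Assumption \ref{con-hd}(iii), as in Lemma \ref{cont-J}, $D(s-\tau_0)g(X_s)$ is bounded by a multiple of $Y_s/D(\tau_0)$. This bound is uniform on $\{\tau_0\le K\}$, and we handle general $\tau_0$ by conditioning on $\cF_{\tau_0}$ or localizing.
\item $\hat R$ is left continuous on $[\tau_0,\infty)$.
\item $\hat R$ is regular in the sense of \cite{pennanen2018}.
\end{itemize}
The strict inequality $\bar\tau>\tau_0$ is never actually used in that argument.

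The existence theorem of \cite{pennanen2018} then gives a maximizer $\tau^*$ of $\E[\hat R_\tau]$ over $\ST(\tau_0)$. Replacing $\tau^*$ by $\tau^*\wedge\rho^{(n)}$ cannot lower the value, so we may take $\tau^*\le\rho^{(n)}$. This truncated $\tau^*$ is optimal for (\ref{pre-opt-n}).

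One gap remains: \cite{pennanen2018} gives optimality in unconditional expectation, while (\ref{pre-opt-n}) asks for optimality under $\E_{\tau_0}$. To close it, suppose some feasible $\tau'$ does strictly better conditionally on a set $B\in\cF_{\tau_0}$ of positive probability. Then the pasted time $\tau'\id_B+\tau^*\id_{B^c}$ is feasible and strictly improves the unconditional value, which contradicts the optimality of $\tau^*$.

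The main obstacle is the class $\mathbf D$ and regularity verification when $\tau_0$ is a random time, rather than a deterministic one reached through the strong Markov shift. I would first try to argue pathwise via the strong Markov reduction used in Lemma \ref{exist-opt-tau}, and fall back on the domination $Z\le Y/D(\tau_0)$ with localization if that reduction does not go through.
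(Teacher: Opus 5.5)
Your proposal is correct and follows the paper's route. The paper treats the corollary as immediate from Lemma \ref{exist-opt-tau} with $\bar\tau=\rho^{(n)}$, and what you rerun (the left-continuous truncated reward $\hat R$, the existence theorem of \cite{pennanen2018}, and the replacement $\tau^*\mapsto\tau^*\wedge\rho^{(n)}$) is that lemma's proof. Your additions are sound refinements of points the paper glosses over: the strict inequality $\bar\tau>\tau_0$ is never used, pasting on a set in $\cF_{\tau_0}$ upgrades unconditional optimality to optimality under $\E_{\tau_0}$, and localizing on $\{K-1<\tau_0\le K\}$ makes the bound $Y_\tau/D(\tau_0)$ give class $\mathbf{D}$. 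One caveat: your left-continuity claim for $\hat R$, like the paper's ``continuous in time'', tacitly needs $D$ to be left-continuous, which Assumption \ref{con-hd} does not state.
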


Furthermore, we have the following properties of the set of optimal solutions of Problem (\ref{pre-opt-n}).
\begin{prop}\label{max-opt-n}
If $\tau_1$ and $\tau_2$ are both optimal stopping times for Problem (\ref{pre-opt-n}), so are $\bar \tau:=\tau_1\vee\tau_2$ and $\underline\tau:=\tau_1\wedge \tau_2$. 

If $\{\tau_n\}_{n=1,2,\cdots}$ is a monotone sequence of stopping times,  then $\lim_{n\rightarrow +\infty}\tau_n$ is also an optimal stopping time for the same problem. 
\end{prop}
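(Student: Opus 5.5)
To prove Proposition~\ref{max-opt-n}, I will use the standard Snell envelope characterisation together with the continuity in Lemma~\ref{cont-J}. Fix $\tau_0\le \rho^{(n)}$ and write $Z_s:=D(s-\tau_0)g(X_s)$ for $s\ge\tau_0$. Let $V:=\operatorname{esssup}_{\tau\in\ST(\tau_0),\tau\le\rho^{(n)}}\E_{\tau_0}[Z_\tau]$ be the optimal value, which is attained by Corollary~\ref{pre-com-exists}. Note that $\tau_1$ optimal means $\E_{\tau_0}[Z_{\tau_1}]=V$ a.s.

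\textbf{Step 1: the maximum $\bar\tau=\tau_1\vee\tau_2$.} Set $A:=\{\tau_1\ge\tau_2\}$; note $A\in\cF_{\tau_1\wedge\tau_2}$. I plan to avoid Snell envelopes here and use pasting instead. Define $\sigma:=\tau_2\id_{A^c}+\tau_1\id_{A}$, which is just $\bar\tau$. I want to show $\E_{\tau_0}[Z_{\bar\tau}]\ge\E_{\tau_0}[Z_{\tau_1}]$. On $A^c=\{\tau_2>\tau_1\}\in\cF_{\tau_1}$, compare continuing to $\tau_2$ with stopping at $\tau_1$. The key claim is that $\E_{\tau_1}[Z_{\tau_2}]\ge Z_{\tau_1}$ on $A^c$. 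Otherwise, on $B:=A^c\cap\{\E_{\tau_1}[Z_{\tau_2}]<Z_{\tau_1}\}\in\cF_{\tau_1}$, the time $\tau_2\id_{B^c}+\tau_1\id_B$ is admissible and strictly better than $\tau_2$ whenever $\PP(B)>0$, contradicting the optimality of $\tau_2$. Here I use the tower property conditioned at $\tau_0$, which is valid because $\tau_1\ge\tau_0$. The claim gives $\E_{\tau_0}[Z_{\bar\tau}]=\E_{\tau_0}[Z_{\tau_1}\id_A+\E_{\tau_1}[Z_{\tau_2}]\id_{A^c}]\ge \E_{\tau_0}[Z_{\tau_1}]=V$, so $\bar\tau$ is optimal.

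\textbf{Step 2: the minimum $\underline\tau$.} Symmetrically, on $A^c$ the time $\underline\tau$ equals $\tau_1$. On $A\cap\{\tau_1>\tau_2\}\in\cF_{\tau_2}$, I show $Z_{\tau_2}\ge\E_{\tau_2}[Z_{\tau_1}]$ by the same perturbation argument. If this failed on a non-null $\cF_{\tau_2}$-set $C$, then replacing $\tau_2$ by $\tau_1$ on $C$ would give an admissible time strictly better than $\tau_2$. Hence $\E_{\tau_0}[Z_{\underline\tau}]\ge\E_{\tau_0}[Z_{\tau_1}]$.

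\textbf{Step 3: monotone limits.} If $\tau_k$ is monotone, then $\tau=\lim\tau_k$ is a stopping time. For decreasing sequences this uses the right continuity of the filtration. The limit satisfies $\tau_0\le\tau\le\rho^{(n)}$. The domination $Z_s\le Y_s/D(\tau_0)$, with $Y$ of class $\mathbf D$, is the same bound as in the proof of Lemma~\ref{cont-J}, now with random $\tau_0$. By path continuity of $X$ and continuity of $g$, $Z_{\tau_k}\to Z_\tau$; this needs $D$ continuous, or at least the convergence asserted in Lemma~\ref{cont-J}, which I will take as given. Conditional dominated convergence (uniform integrability) then gives $\E_{\tau_0}[Z_\tau]=\lim\E_{\tau_0}[Z_{\tau_k}]=V$.

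\textbf{Main obstacle.} The delicate point is the conditioning at the random time $\tau_0$ and the fact that $D(\cdot-\tau_0)$ depends on $\tau_0$. Both are handled by fixing $\tau_0$ throughout, since $Z$ is then a fixed adapted process. Step 1 also needs care with the measurability of the pasting sets: I must check $\{\tau_2>\tau_1\}\in\cF_{\tau_1}$, which is standard.
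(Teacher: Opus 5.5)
Your proof is correct, but for the lattice part it takes a different route from the paper. The paper uses a short exchange argument. Splitting on $\{\tau_1\ge\tau_2\}$, it writes $\E_{\tau_0}[Z_{\tau_1}]=a_1+b_1$ and $\E_{\tau_0}[Z_{\tau_2}]=a_2+b_2$. The values of $\bar\tau$ and $\underline\tau$ are then the recombinations $a_1+b_2$ and $a_2+b_1$, which sum to $2v$ while each is at most $v$ by feasibility, so both equal $v$. That route needs no conditioning at the intermediate times $\tau_1,\tau_2$, no auxiliary pasted stopping times, and no contradiction argument.

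Your perturbation argument instead extracts the local consequences of the optimality of $\tau_2$:
on $\{\tau_2>\tau_1\}$, continuing from $\tau_1$ to $\tau_2$ is conditionally no worse than stopping;
on $\{\tau_1>\tau_2\}$, stopping at $\tau_2$ is conditionally no worse than continuing to $\tau_1$.
This costs you the tower property at $\tau_1$ and $\tau_2$, and a check that $\tau_2\id_{B^c}+\tau_1\id_B$ is a stopping time. That check does hold: $B\in\cF_{\tau_1}$ and $B\subset\{\tau_1<\tau_2\}$ give $B\in\cF_{\tau_1\wedge\tau_2}$. In exchange your route proves more. Only $\tau_2$ needs to be optimal, and then $\tau_1\vee\tau_2$ and $\tau_1\wedge\tau_2$ are at least as good as an arbitrary admissible $\tau_1$. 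This is the same pasting device the paper itself uses later in the proof of Proposition~\ref{exists-s-n}.

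For the monotone-limit part you follow the paper, which simply invokes Assumption~\ref{con-hd}(iv). Your explicit domination $D(\tau_0)Z_s\le Y_s$, with the $\cF_{\tau_0}$-measurable factor pulled out of the conditional expectation, correctly fills in that ``obvious'' step for a random $\tau_0$. You also tacitly, and rightly, read the second statement as concerning a monotone sequence of \emph{optimal} times.
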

\begin{proof}
Denote $v$ as the optimal value of Problem (\ref{pre-opt-n}). Define 
$$\begin{array}{cc}
a_1=\E_{\tau_0}[D(\tau_1-\tau_0)g(X_{\tau_1})\id_{\tau_1\ge \tau_2}], & b_1=\E_{\tau_0}[D(\tau_1-t)g(X_{\tau_1})\id_{\tau_1< \tau_2}];\\
a_2=\E_{\tau_0}[D(\tau_2-t)g(X_{\tau_2})\id_{\tau_1\ge \tau_2}], & b_2=\E_{\tau_0}[D(\tau_2-t)g(X_{\tau_2})\id_{\tau_1< \tau_2}].
\end{array}
$$
Its is easy to see that 
$$a_1+b_1=\E_{\tau_0}[D(\tau_1-\tau_0)g(X_{\tau_1})]=v=\E_{\tau_0}[D(\tau_2-\tau_0)g(X_{\tau_2})]=a_2+b_2$$
and $$a_1+b_2=\E_{\tau_0}[D(\bar\tau-\tau_0)g(X_{\bar\tau})], \quad a_2+b_1=\E_{\tau_0}[D(\underline \tau-\tau_0)g(X_{\underline \tau})].$$
Hence $\E_{\tau_0}[D(\bar\tau-\tau_0)g(X_{\bar\tau})]+\E_{\tau_0}[D(\underline \tau-\tau_0)g(X_{\underline \tau})]=2v.$
Together with the fact that 
$$\E_{\tau_0}[D(\bar\tau-\tau_0)g(X_{\bar\tau})]\le v, \mbox{  and } \E_{\tau_0}[D(\underline\tau-\tau_0)g(X_{\underline\tau})]\le v,$$
 we know that both inequalities above are equalities. 
 
 The second statement is obvious with Assumption \ref{con-hd}.(iv). 
\end{proof}

With Corollary \ref{pre-com-exists} and Proposition \ref{max-opt-n}, we know that there exists a maximal optimal stopping time $\bar s_n$, which is the essential supremum of all optimal stopping times. 

Now we claim the main conclusion in this subsection. 

\begin{prop}\label{exists-s-n}
For any $n\ge 1$, any $t<\rho^{(n+1)}$, there exists a stopping time $\tau$ with $\rho^{(n+1)}\le \tau\le \rho^{(n)}$, 
such that $J(\tau;t)\ge J(t;t)$. 
\end{prop}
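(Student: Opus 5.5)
Fix $n\ge 1$ and $t<\rho^{(n+1)}$ (on the event $\{t<\rho^{(n+1)}\}$). Since $t<\rho^{(n+1)}=F(\rho^{(n)})$, the definition of $F$ gives, on that event, some $\tau_1\in\ST(t)$ with $t<\tau_1\le\rho^{(n)}$ and $J(\tau_1;t)\ge J(t;t)$. The idea is to push $\tau_1$ forward past $\rho^{(n+1)}$ without lowering $J(\cdot;t)$. Concretely, I take $\tau^\sharp$ to be the maximal optimal stopping time $\bar s_n$ of Problem (\ref{pre-opt-n}) with $\tau_0=t$. It exists by Corollary \ref{pre-com-exists} and Proposition \ref{max-opt-n}. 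By optimality, $J(\tau^\sharp;t)\ge J(\tau_1;t)\ge J(t;t)$ and $\tau^\sharp\le\rho^{(n)}$. It then suffices to show $\tau^\sharp\ge\rho^{(n+1)}$.

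I argue by contradiction. Suppose $A:=\{\tau^\sharp<\rho^{(n+1)}\}$ has positive probability. At $s:=\tau^\sharp$ on $A$ we have $s<\rho^{(n+1)}=F(\rho^{(n)})$. Hence immediate stopping at $s$ is not strictly better than every $\tau\in\ST(s)$ with $s<\tau\le\rho^{(n)}$. Using measurable selection (or applying Lemma \ref{exist-opt-tau} at $\tau_0=\tau^\sharp$ and taking the maximal optimal time $\sigma$ of the problem from $\tau^\sharp$ up to $\rho^{(n)}$), I get $\sigma\ge\tau^\sharp$ with $\sigma>\tau^\sharp$ on $A$ and
\[
g(X_{\tau^\sharp})\le \E_{\tau^\sharp}\bigl[D(\sigma-\tau^\sharp)g(X_\sigma)\bigr].
\]
A subtlety here is that the infimum in $F$ may be attained only as a limit, so on $A$ the point $\tau^\sharp$ might itself be a point where stopping is strictly better. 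I handle this with right-continuity (Lemma \ref{cont-J}): the set of $s$ where stopping is strictly best is open on the right in a suitable sense, so points below $\rho^{(n+1)}$ are never in it.

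Next I use decreasing impatience, Assumption \ref{con-hd}(iii): $D(\sigma-t)\ge D(\tau^\sharp-t)D(\sigma-\tau^\sharp)$. Define $\tau':=\sigma\id_A+\tau^\sharp\id_{A^c}$. Then
\[
\E_t[D(\tau'-t)g(X_{\tau'})]\ge \E_t\bigl[D(\tau^\sharp-t)\,\E_{\tau^\sharp}[D(\sigma-\tau^\sharp)g(X_\sigma)]\id_A\bigr]+\E_t[D(\tau^\sharp-t)g(X_{\tau^\sharp})\id_{A^c}]\ge J(\tau^\sharp;t).
\]
So $\tau'\le\rho^{(n)}$ is also optimal for Problem (\ref{pre-opt-n}), and it is strictly larger than $\tau^\sharp$ on $A$. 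This contradicts the maximality of $\bar s_n$. Therefore $\PP(A)=0$, and $\tau=\tau^\sharp$ works.

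The main obstacle is the step producing $\sigma$. It requires converting the pathwise, non-strict failure of the condition defining $F$ at the random time $\tau^\sharp$ into a genuine stopping time $\sigma>\tau^\sharp$ on $A$ with the weak inequality. I would obtain it by taking $\sigma$ to be the maximal optimizer of the problem started at $\tau^\sharp$, via Lemma \ref{exist-opt-tau} and the strong Markov property. On $A$, that optimizer's value is at least $g(X_{\tau^\sharp})$. If it equals $\tau^\sharp$ on part of $A$, then stopping there would be strictly best, which is excluded as above. Equality cases are absorbed by the ``stop later'' convention together with maximality.
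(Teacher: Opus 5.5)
Your proof is the paper's own argument. You take the maximal optimizer $\bar s_n$ of Problem (\ref{pre-opt-n}), suppose it falls below $\rho^{(n+1)}$ on a non-null set, paste in a later time there via decreasing impatience to obtain another optimizer, and contradict maximality. The right-continuity detour is unnecessary: on $A$ you have $\tau^\sharp<\rho^{(n+1)}$ strictly, so $\tau^\sharp$ lies below the infimum defining $F(\rho^{(n)})$ and cannot belong to the set over which it is taken; the paper simply asserts the existence of your $\sigma$ (its $\hat\tau$) directly from the definition of $\rho^{(n+1)}$.
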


\begin{proof}
Since $t<\rho^{(n+1)}\le \rho^{(n)}$,  by the definition of $\rho^{(n+1)}$, there exists a $\tau\in \ST(t)$ with $\tau\le \rho^{(n)}$ and $\tau>t$, such that 
$\E_t[D(\tau-t)g(X_\tau)]\ge \E_t[D(t-t)g(X_t)]=g(X_t)$. 

By Corollary \ref{pre-com-exists} and Proposition \ref{max-opt-n}, we have the maximal optimal stopping time $\bar s_n$ for Problem (\ref{pre-opt-n}).
Hence $\bar s_n>t$ and 
$$\E_t[D(\bar s_{n}-t)g(X_{\bar s_n})]\ge g(X_t).$$

Now we claim that $\bar s_n\ge \rho^{(n+1)}$.  If this is not true, we denote $\Gamma:=\{\omega: \bar s_n <\rho^{(n+1)}\}$, then $\PP_t(\Gamma)>0$, and $\Gamma\in \cF_{\bar s_n \wedge \rho^{(n+1)}}$.
 
Over $\Gamma$, since $\bar s_n<\rho^{(n+1)}\le \rho^{(n)}$, there exists a stopping time $\hat\tau>\bar s_n$ with $\hat\tau\le \rho^{(n)}$, such that 
 $g(X_{\bar s_n})\le \E_{\bar s_n} [D(\hat\tau-\bar s_n)g(X_{\hat\tau})]$, hence 
\begin{eqnarray*}
&&\E_t[D(\bar s_n-t)g(X_{\bar s_n})]\\
&=&\E_t[D(\bar s_n-t)g(X_{\bar s_n})\id_\Gamma]+\E_t[D(\bar s_n-t)g(X_{\bar s_n})\id_{\Gamma^c}]\\
&\le&\E_t[D(\bar s_n-t) \E_{\bar s_n} [D(\hat\tau-\bar s_n)g(X_{\hat\tau})] \id_\Gamma]+\E_t[D(\bar s_n-t)g(X_{\bar s_n})\id_{\Gamma^c}]\\
&=&\E_t[D(\bar s_n-t) D(\hat\tau-\bar s_n) g(X_{\hat\tau}) \id_\Gamma]+\E_t[D(\bar s_n-t)g(X_{\bar s_n})\id_{\Gamma^c}]\\
&\le &\E_t[D(\hat\tau-t) g(X_{\hat\tau}) \id_\Gamma]+\E_t[D(\bar s_n-t)g(X_{\bar s_n})\id_{\Gamma^c}]\\
&=&\E_t[D(\bar\tau-t) g(X_{\bar\tau})
\end{eqnarray*}
with $\bar\tau=\bar s_n\id_{\Gamma^c}+\hat\tau\id_{\Gamma}\le \rho^{(n)}$. Hence $\bar \tau$ must be an optimal stopping time for 
Problem (\ref{pre-opt-n}), and $\bar\tau\ge \bar s_n, \PP(\bar\tau>\bar s_n)>0$, which contradicts the fact that $\bar s_n$ is the maximal optimal stopping time for Problem (\ref{pre-opt-n})! So we conclude that   $\bar s_n\ge \rho^{(n+1)}$ is true, with which we have 
$$\rho^{(n+1)}\le \bar s_n \le \rho^{(n)}\mbox{ and }\E_t[D(\bar s_n-t)g(X_{\bar s_n})]\ge g(X_t).$$
\end{proof}

\begin{theorem}
$\rho^{(\infty)}$ is the sophisticated solution.
\end{theorem}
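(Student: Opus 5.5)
The plan is to combine Theorem \ref{suff-soph} with the sufficient condition of Theorem \ref{infinty-work}. By Theorem \ref{suff-soph}, it is enough to show that $\rho^{(\infty)}$ is approachable, i.e.\ $F(\rho^{(\infty)})=\rho^{(\infty)}$. Theorem \ref{infinty-work} reduces this to two hypotheses:
\begin{itemize}
\item[(a)] right lower semi-continuity of $J(\cdot;t)$ along decreasing sequences of stopping times;
\item[(b)] for every $n$ and every $t<\rho^{(n)}$, the existence of $\tilde\rho\in\ST(t)$ with $\rho^{(n)}\le\tilde\rho\le\rho^{(n-1)}$ and $J(\tilde\rho;t)\ge J(t;t)$.
\end{itemize}
I will verify each in turn.

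For (a), Lemma \ref{cont-J} already gives full continuity of $J(\tau;t)=\E_t[D(\tau-t)g(X_\tau)]$ along any convergent sequence of stopping times, using the class $\mathbf{D}$ assumption together with decreasing impatience. In particular, for $\tau_n\downarrow\tau$ we get $\lim_n J(\tau_n;t)=J(\tau;t)$, which is stronger than the required inequality.

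For (b), I apply Proposition \ref{exists-s-n} with the index shifted: given $n\ge 1$ and $t<\rho^{(n+1)}$, it produces $\tau$ with $\rho^{(n+1)}\le\tau\le\rho^{(n)}$ and $J(\tau;t)\ge J(t;t)$. This is exactly the hypothesis of Theorem \ref{infinty-work} at level $n+1$. The remaining base case is $n=1$, with $\rho^{(0)}=+\infty$ and $t<\rho^{(1)}$. Here I will rerun the argument of Proposition \ref{exists-s-n} with horizon $\rho^{(0)}$.
\begin{itemize}
\item By definition of $\rho^{(1)}$, some $\tau>t$ beats immediate stopping.
\item Take the maximal optimizer $\bar s_0$ of the unconstrained problem. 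Lemma \ref{exist-opt-tau} and Proposition \ref{max-opt-n} apply, since class $\mathbf{D}$ lets one interpret stopping at $+\infty$ via the limit.
\item The same pasting argument, which uses $D(a)D(b)\le D(a+b)$, shows that $\bar s_0\ge\rho^{(1)}$.
\end{itemize}

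The one point needing care is that Theorem \ref{infinty-work} only asks for the existence condition at a fixed $t$, and its proof uses it for $t<\rho^{(\infty)}\le\rho^{(n)}$ for all $n$. So for each fixed $t<\rho^{(\infty)}$ (on the relevant event), the hypotheses hold for every $n$. Choosing $\tilde\rho_n$ between $\rho^{(n)}$ and $\rho^{(n-1)}$ forces $\tilde\rho_n\downarrow\rho^{(\infty)}$, and continuity gives $J(\rho^{(\infty)};t)\ge J(t;t)$. Hence no $t<\rho^{(\infty)}$ satisfies the strict-inequality condition in the definition of $F(\rho^{(\infty)})$, so $F(\rho^{(\infty)})=\rho^{(\infty)}$.

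I expect the main obstacle to be a measurability subtlety: $t$ is deterministic, while $t<\rho^{(n)}$ holds only on an event. The comparison $J(\tilde\rho;t)\ge J(t;t)$ should therefore be read on $\{t<\rho^{(\infty)}\}\in\cF_t$. Similarly, $\tilde\rho>t$ must be arranged on that event, which can be done by pasting $\tilde\rho$ with an arbitrary admissible time off the event. Once this is handled, Theorem \ref{suff-soph} yields $\rho^{(\infty)}=\tau_*=\rho^*$.
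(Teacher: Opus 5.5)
Your proposal is correct and follows the paper's proof: Proposition \ref{exists-s-n} supplies the sandwich hypothesis of Theorem \ref{infinty-work}, Lemma \ref{cont-J} supplies the right lower semicontinuity, and Theorem \ref{suff-soph} then gives $\rho^{(\infty)}=\tau_*=\rho^*$. The only difference is your separate $n=1$ base case, which the paper omits and which you can also drop, since the limit argument only uses $\tilde\rho_n$ for large $n$; dropping it also removes the unjustified claim that the unconstrained problem has a maximal optimizer when stopping at $+\infty$ is interpreted via a limit.
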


\begin{proof}
With Proposition 
\ref{exists-s-n}, and Theorem \ref{infinty-work}, we only need to prove that $\E_t\left[D(\tau-t)g(X_{\tau})\right]$ is lower semicontinuous from the right hand side. By Lemma \ref{cont-J}, $J(\cdot;t)$ is continuous in $\tau$ 
when we have  Assumption \ref{con-hd}.(iv). 

\end{proof}

Furthermore, we have the following strong property of $\rho^{(\infty)}$  from Proposition  \ref{exists-s-n}. 
\begin{prop}\label{opt-rho-inf}
For any time $t<\rho^{(\infty)}$, $J(\rho^{(\infty)};t)=\max_{\tau\in \ST(t), \tau\le \rho^{(\infty)}}J(\tau;t)$. 
\end{prop}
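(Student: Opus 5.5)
The plan is to pass to the limit $n\to\infty$ in the maximal pre-committed optimal stopping times $\bar s_n$ from Proposition \ref{exists-s-n}, using the continuity of $J(\cdot;t)$ from Lemma \ref{cont-J}. Fix $t<\rho^{(\infty)}$, so $t<\rho^{(n+1)}$ for every $n$. For each $n$, let $\bar s_n$ be the maximal optimal stopping time of Problem (\ref{pre-opt-n}) with $\tau_0=t$ and horizon $\rho^{(n)}$. Write
$$v_n:=\max_{\tau\in\ST(t),\tau\le\rho^{(n)}}J(\tau;t),$$
which is attained by Corollary \ref{pre-com-exists}. The proof of Proposition \ref{exists-s-n} shows that $\rho^{(n+1)}\le \bar s_n\le \rho^{(n)}$ and $J(\bar s_n;t)=v_n$.

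\textbf{Monotonicity and limits.} The feasible sets shrink in $n$ because $\rho^{(n)}\downarrow$, so $v_n$ is decreasing in $n$. The squeeze $\rho^{(n+1)}\le\bar s_n\le\rho^{(n)}$ gives $\bar s_n\to\rho^{(\infty)}$. Lemma \ref{cont-J} then yields
$$J(\rho^{(\infty)};t)=\lim_n J(\bar s_n;t)=\lim_n v_n.$$
Denote $v_\infty:=\max_{\tau\in\ST(t),\tau\le\rho^{(\infty)}}J(\tau;t)$, also attained by Lemma \ref{exist-opt-tau}. Since the feasible set for $v_\infty$ is contained in each feasible set for $v_n$, we have $v_n\ge v_\infty$ for all $n$. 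Hence $J(\rho^{(\infty)};t)=\lim_n v_n\ge v_\infty$. The reverse inequality $J(\rho^{(\infty)};t)\le v_\infty$ is trivial because $\rho^{(\infty)}$ is itself feasible. This gives the claimed equality, and $\rho^{(\infty)}$ attains the maximum.

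\textbf{Main obstacle.} The key point is justifying that $\bar s_n$ really lies between $\rho^{(n+1)}$ and $\rho^{(n)}$ while being optimal for the horizon $\rho^{(n)}$, not merely satisfying $J(\bar s_n;t)\ge J(t;t)$. This is exactly the contradiction argument in the proof of Proposition \ref{exists-s-n}, which I will cite rather than redo. One subtlety there is that it uses the definition of $\rho^{(n+1)}$ on the event $\{\bar s_n<\rho^{(n+1)}\}$, and the concatenated time $\bar\tau$ requires the $\hat\tau$ to be chosen measurably. I will take that as established.

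A second subtlety is that $t$ may be random (a stopping time), and all the inequalities are then conditional, i.e. $\cF_t$-measurable random variables. The same argument works pathwise a.s. Lemma \ref{cont-J} is stated for conditional expectations and handles the limit via uniform integrability (class $\mathbf D$), so the limit passage is legitimate.
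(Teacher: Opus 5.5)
Your proposal is correct and follows essentially the same route as the paper: take the maximal pre-committed optimal times $\bar s_n$ from the proof of Proposition \ref{exists-s-n}, use $\rho^{(n+1)}\le \bar s_n\le\rho^{(n)}$ to get $\bar s_n\to\rho^{(\infty)}$, and pass to the limit with Lemma \ref{cont-J}. Your value-function bookkeeping ($v_n\ge v_\infty$, monotonicity of $v_n$) only repackages the paper's direct bound $J(\tau;t)\le J(s_n;t)$ for feasible $\tau\le\rho^{(\infty)}$; the monotonicity of $v_n$ and the attainment of $v_\infty$ are not needed for the conclusion.
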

\begin{proof}
Given a time $t<\rho^{(\infty)}$, for any integer $n$, by Proposition \ref{exists-s-n}, there exists a stopping time 
$s_n$ with $\rho^{(n+1)}\le s_n\le \rho^{(n)}$, and $s_n$ is optimal for $\max_{\tau\in \ST(t), \tau\le \rho^{(n)}} J(\tau;t)$.
Hence, for any stopping time $\tau\in\ST(t)$ with $\tau\le \rho^{(\infty)}$, $J(\tau;t)\le J(s_n;t)$. 

Since $s_n\rightarrow \rho^{(\infty)}$, by Lemma \ref{cont-J}, we know $J(\rho^{(\infty)};t)=\lim_{n\rightarrow +\infty} J(s_n;t)\ge J(\tau;t)$, which proves the statement in this proposition.
\end{proof}

For the non-exponential discounting problem in this subsection,  we only claim that $\rho^{(\infty)}$ is the sophisticated time. 
It can be very hard to calculate $\rho^{(n)}$ explicitly, especially  in multi-dimensional cases. 
In the next subsection, we will show by an example that explicit calculation is possible in  $1$-dimensional case with $b(\cdot,\cdot)$ and $\sigma(\cdot, \cdot)$ being  time independent.

\subsection{A Detailed Example of Non-Exponential Discounting}
In this section, we study the optimal stopping with the objective at time $t$
\begin{equation}\label{ex-non-expo}
J(\tau;t)=\E_t\left[\frac{|B_\tau|}{1+\beta (\tau-t)}\right]
\end{equation}
with the discounting function $D(t)=\frac{1}{1+t}$ 
where $\beta$ is a positive constant, and  $B_\cdot$ is a $1$-dimensional standard Brownian motion. 

Fitting Problem (\ref{ex-non-expo}) into the discussion in Section \ref{sec62}, we have 
$D(t)=\frac{1}{1+\beta t}, \mu(t,x)=0, \sigma(t,x)=1, g(x)=|x|$. 
So Assumption (\ref{con-hd}).(i,ii, iii) hold. 
We need to check the uniform integrability of $\frac{|B_t|}{1+\beta t}$.

\begin{lemma}\label{Y-class-D}
$Y_t:=\frac{|B_t|}{1+\beta t}$ is of {{class ${\mathbf D}$.}}
\end{lemma}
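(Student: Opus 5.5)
Our aim is to show that $\{Y_\tau\}_{\tau\in\ST(0)}$ is uniformly integrable, where $Y_t=|B_t|/(1+\beta t)$ and $\tau$ ranges over all stopping times, including those that may be infinite. (On $\{\tau=+\infty\}$ we set $Y_\infty=0$, since $|B_t|/(1+\beta t)\to 0$ a.s. by the law of the iterated logarithm.) The most robust route is to dominate the whole family by one integrable random variable $Z:=\sup_{t\ge 0} Y_t$. Then $|Y_\tau|\le Z$ for every $\tau$, so $\E[|Y_\tau|\id_{|Y_\tau|>K}]\le \E[Z\id_{Z>K}]\to 0$ uniformly in $\tau$ as $K\to\infty$, by dominated convergence. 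It therefore suffices to prove $\E[Z]<\infty$; in fact we aim for $\E[Z^2]<\infty$.

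To bound $Z$ we split the time axis dyadically. Set $I_0=[0,1]$ and $I_k=[2^{k-1},2^k]$ for $k\ge 1$. On $I_k$ with $k\ge1$ we have $1+\beta t\ge \beta 2^{k-1}$, hence
$$\sup_{t\in I_k}Y_t\le \frac{\sup_{s\le 2^k}|B_s|}{\beta 2^{k-1}},$$
and on $I_0$ simply $\sup_{t\le 1} Y_t\le \sup_{s\le1}|B_s|$. Doob's $L^2$ maximal inequality gives $\E[\sup_{s\le 2^k}|B_s|^2]\le 4\cdot 2^k$. Consequently
$$\E\Big[\Big(\sup_{t\in I_k}Y_t\Big)^2\Big]\le \frac{4\cdot 2^k}{\beta^2 4^{k-1}}=\frac{16}{\beta^2}2^{-k}.$$
Since $Z^2\le \sum_k (\sup_{I_k}Y)^2$, we get $\E[Z^2]\le 4+\frac{16}{\beta^2}\sum_{k\ge1}2^{-k}<\infty$. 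In particular $Z\in L^1$, which completes the uniform integrability argument.

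The only delicate point is making sure the domination covers unbounded and infinite stopping times. This is exactly why we take a global supremum over $[0,\infty)$ instead of working on bounded horizons, and the dyadic blocking handles it, because the variance of the blockwise suprema decays geometrically. As an alternative, one could bound $\E[Y_\tau^2]$ directly via optional sampling for $B_t^2-t$, but that controls $\E[\tau\wedge n]$ poorly when $\tau$ is large. The maximal-function approach avoids this issue entirely, so we would use it.
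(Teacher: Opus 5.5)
Your proof is correct, but it takes a different route from the paper.

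\textbf{The paper's route.} The paper never controls the running supremum. It applies It\^o's formula to $Y_t^2$ and finds that the drift is $\frac{1+\beta t-2\beta B_t^2}{(1+\beta t)^3}\le (1+\beta t)^{-2}$, with the remaining term a martingale on finite horizons. Hence $\E[Y_{\tau\wedge T}^2]\le\int_0^\infty(1+\beta s)^{-2}\,ds=1/\beta$ for every stopping time $\tau$ and every $T$. Fatou's lemma then gives $\sup_\tau\E[Y_\tau^2]\le 1/\beta$, and boundedness in $L^2$ yields uniform integrability.

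\textbf{Your dismissal of the direct bound.} This computation shows your closing remark is too pessimistic. The direct second-moment bound fails for $B_t^2-t$ alone. It works cleanly, however, once the discount is built into the process before applying It\^o: differentiating $(1+\beta t)^{-2}$ produces the nonpositive term $-2\beta B_t^2/(1+\beta t)^3$, which absorbs the growth of $B^2$.

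\textbf{What your route buys.} Your dyadic Doob argument gives a strictly stronger conclusion, $\sup_{t\ge0}Y_t\in L^2$. The whole family is then dominated by a single integrable variable, rather than merely being of class D. That extra strength is useful downstream. In Lemma \ref{cont-J} the paper exchanges a limit with the conditional expectation $\E_t[\cdot]$. For this example, your domination, combined with $D(s-t)\le D(s)/D(t)$, justifies that exchange almost surely via conditional dominated convergence. Class D by itself only guarantees $L^1$ convergence of the conditional expectations.

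\textbf{What the paper's route buys.} The paper's argument is a one-line computation with a sharper explicit constant ($1/\beta$ versus your $4+16/\beta^2$), and it needs no maximal inequality.
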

\begin{proof}
Denote $Z_t=Y_t^2$, then 
$$dZ_t=\left(\frac{1+\beta t-2\beta B_t^2}{(1+\beta t)^3}\right)dt+\frac{2B_t}{(1+\beta t)^2}dB_t.$$
For any stopping time $\tau$ and finite time $T>0$, 
\begin{eqnarray*}
\E[Z_{\tau\wedge T}]&=&\E\left[\int_0^{\tau\wedge T}\left(\frac{1+\beta s-2\beta B_s^2}{(1+\beta s)^3}\right)ds\right]\\
&<&\E\left[\int_0^{\tau\wedge T}\left(\frac{1}{(1+\beta s)^2}\right)ds\right]\\
&\le&\E\left[\int_0^{+\infty}\left(\frac{1}{(1+\beta s)^2}\right)ds\right]\\
&:=&M<+\infty.
\end{eqnarray*}
Hence $\E[Z_\tau]=\E[\lim_{T\rightarrow +\infty} Z_{\tau\wedge T}]\le \lim_{T\rightarrow +\infty} E[Z_{\tau\wedge T}]\le M $
which implies that $Y$ is of class $\mathbf{D}$. 
\end{proof}

With this lemma, we can apply all conclusions in Section \ref{sec62} for Problem (\ref{ex-non-expo}), construct high order policies $\{\rho^{(n)}\}_{n=1,2,\cdots}$ and its limit $\rho^{(\infty)}$, and we know $\rho^{(\infty)}$ is the sophisticated solution. 

We calculate $\rho^{(\infty)}$ by Proposition 
 \ref{opt-rho-inf}, which claims that at any time $t<\rho^{(\infty)}$, $\rho^{(\infty)}$ is optimal for the static {{problem 
$\max_{\tau\in\ST(t), \tau\le \rho^{(\infty)}} \E_t\left[\frac{|B_\tau|}{1+\beta (\tau-t)}\right]$.}}

With this property, our definition of maximal approachable time corresponds to an equilibrium policy defined in Huang and Nguyen-Huu \cite{huang2018time}, and it achieves the highest $J(\tau; 0)$ over all approachable times. 
In the rest of this subsection, we  borrow the following lemma \ref{Huang-NH}  from \cite{huang2018time} to make an explicit calculation of $\rho^{(\infty)}$. 

It is easy to see that Problem (\ref{ex-non-expo}) is not only Markovian, but also time homogeneous, hence 
the $1^{st}$-order naive time   $\rho^{(1)}=\inf\{t\ge 0: |B_t|\in A_1\}$ for some closed set $A_1\subset \R^+$. 
Similarly, $$\rho^{(n)}=\inf\{t\ge 0: |B_t|\in A_n\}$$
for some closed set $A_n\subset \R^+$ with $A_n\supset A_{n-1}$, 
  $\rho^{(\infty)}=   \inf\{t\ge 0: |B_t|\in {\rm cl}(A_{\infty}) \}$ with $A_\infty=\cup_n A_n$ and ${\rm cl}(A_\infty)$ is the closure of $A_\infty$\footnote{In fact, by  $\rho^{(\infty)}:=\lim_{n\rightarrow +\infty}\rho^{(n)}$, we should conclude that $\rho^{(\infty)}=   \inf\{t\ge 0: |B_t|\in A_{\infty} \}$. Furthermore, by Lemma 4.4 in \cite{huang2020optimal}, we can replace $A_\infty$ with $\rm{cl}(A_\infty)$. 
  }. 

Since $|B_0|=0$ and $\rm{cl}(A_\infty)\subset \mathbb{R}^+$, we know $\rho^{(\infty)}=\inf\{t>0: |B_t|=b\}=:\tau_b$ with $b=\inf A_\infty$. 
In fact, by Proposition \ref{opt-rho-inf}, and the fact that $\rho^{(\infty)}$ is the largest approachable time, we know that $b$ is also the biggest number  such that, 
for any $t<\tau_b:=\inf\{t>0: |B_t|=b\}$, $\tau_b$ is optimal for $$\max_{\tau\in \ST(t), \tau\le \tau_b} \frac{|B_\tau|}{1+\beta(\tau-t)}.$$
Equivalent, with the notation $B^x_t:=B_t+x$ and $\tau^x_b:=\inf\{t>0: |B^x_t|=b\}$, then $b$ is the biggest number such that 
for any $x$ with $|x|<b$, $\tau^x_b$  is optimal for 
$$\max_{\tau\in\ST(0), \tau\le \tau^x_b } J(\tau; x,b):=\E\left[\frac{|B^x_\tau|}{1+\beta \tau}\right].$$

For any real number $a>0$ and $x$ with $0<x\le a$, define  $\eta(x,a):=\E\left[\frac{a}{1+\beta \tau^x_a}\right]=J(\tau^x_a; x,a)$. We have the following lemma from Huang and Nguyen-Huu \cite{huang2018time}.

\begin{lemma}\label{Huang-NH}
\begin{itemize}
	\item [(1)] $\eta(x,a)$ is strictly increasing and strictly convex in $x$ over $x\in [0,a]$, and $0<\eta(0,a)<\eta(a,a)=a$.
	\item [(2)] There exists a unique $a^*\in (0, \beta^{-1/2})$ such that
	\begin{itemize}
		\item [(2.a)] $\forall\, a>a^*$, equation $\eta(x,a)=x$ admits  a unique solution $x^*(a)$ over the interval $x\in (0, a^*)$, hence $\eta(x,a)>x$ for any $x<x^*(a)$ and $x>\eta(x,a)$ for any $x>x^*(a)$. 
		\item [(2.b)] $\forall\, a\le a^*$, $\eta(x,a)>x$ for any $x\in (0, a)$.  
	\end{itemize}
\end{itemize}
\end{lemma}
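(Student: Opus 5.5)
The plan is to make $\eta$ explicit, so that both parts reduce to elementary properties of $\cosh$. First I write
$$\frac{1}{1+\beta\tau}=\int_0^\infty e^{-s}e^{-s\beta\tau}\,ds .$$
I then use the classical Laplace transform of the exit time of $B^x$ from $(-a,a)$, namely $\E[e^{-\lambda\tau^x_a}]=\cosh(x\sqrt{2\lambda})/\cosh(a\sqrt{2\lambda})$. By Fubini (everything is nonnegative) this gives
$$\eta(x,a)=a\int_0^\infty e^{-s}\,\frac{\cosh\bigl(x\sqrt{2\beta s}\bigr)}{\cosh\bigl(a\sqrt{2\beta s}\bigr)}\,ds .$$
Part (1) follows directly. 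On $x\in[0,a]$ the integrand is strictly increasing and strictly convex in $x$, since $\cosh$ is on $[0,\infty)$ and $s>0$ a.e.; differentiating under the integral is justified by dominated convergence. Setting $x=a$ gives $\eta(a,a)=a$. Positivity of the integrand gives $\eta(0,a)>0$, and $\cosh(0)<\cosh(a\sqrt{2\beta s})$ gives $\eta(0,a)<a$.

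For part (2), fix $a$ and set $f(x)=\eta(x,a)-x$ on $[0,a]$. This $f$ is strictly convex with $f(0)>0$ and $f(a)=0$, so everything is decided by the sign of $f'(a)=h(a)-1$, where
$$h(a):=\partial_x\eta(a,a)=a\int_0^\infty e^{-s}\sqrt{2\beta s}\,\tanh\bigl(a\sqrt{2\beta s}\bigr)\,ds .$$
\begin{itemize}
\item If $h(a)\le1$, then $f'$ is strictly increasing and $f'(a)\le0$, so $f'<0$ on $[0,a)$. Hence $f$ strictly decreases to $0$, i.e.\ $\eta(x,a)>x$ on $(0,a)$, which is (2.b).
\item If $h(a)>1$, then $f$ is negative just to the left of $a$ and positive at $0$. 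Strict convexity then gives exactly one interior zero $x^*(a)$, with $f>0$ before it and $f<0$ after it.
\end{itemize}
The function $h$ is continuous and strictly increasing in $a$, with $h(0^+)=0$ and $h(\infty)=\infty$. So there is a unique $a^*$ with $h(a^*)=1$, and the dichotomy $a\le a^*$ versus $a>a^*$ is precisely (2.b) versus (2.a). To get $a^*<\beta^{-1/2}$ I will check $h(\beta^{-1/2})>1$. After the substitution $u=\sqrt{2s}$ this is the numerical inequality $\int_0^\infty e^{-s}\sqrt{2s}\tanh(\sqrt{2s})\,ds>1$. I plan to prove it with a lower bound such as $\tanh y\ge y/(1+y)$, or by splitting the integral; the full integral is about $1.2$, so a crude bound should suffice.

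The main obstacle is the remaining claim in (2.a) that $x^*(a)<a^*$ for every $a>a^*$, i.e.\ $f(a^*)<0$, or equivalently $\eta(a^*,a)<a^*$. I would first try Brownian scaling, $\eta(x,a)=a\,\E\bigl[1/(1+\beta a^2\tau^{x/a}_1)\bigr]$, to turn this into a monotonicity statement in $a$ for the ratio $x^*(a)/a$. As an alternative I would use the strong Markov property at the exit time of $(-a^*,a^*)$, together with decreasing impatience $D(t)D(s)\le D(t+s)$, to compare $\eta(\cdot,a)$ with $\eta(\cdot,a^*)$. The key input there is that $\eta(\cdot,a^*)$ is tangent to the diagonal at $a^*$. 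If neither route closes cleanly, I would fall back on a direct comparison of the integral formulas for $\eta(a^*,a)$ and $a^*$.
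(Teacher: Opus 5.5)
Your integral representation, part (1), and the reduction of part (2) to the sign of $h(a)-1$ with $h(a)=\partial_x\eta(a,a)$ are correct. They give (2.b), a unique $a^*$, and, for $a>a^*$, a unique root $x^*(a)\in(0,a)$ with the stated sign pattern. (The paper does not prove this lemma itself; it imports it from Huang and Nguyen-Huu.)

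The genuine gap is the localisation $x^*(a)<a^*$ required in (2.a), i.e.\ $\eta(a^*,a)<a^*$ for every $a>a^*$. You leave this open, and neither of your routes closes it as described:
\begin{itemize}
\item Monotonicity of $x^*(a)/a$ in $a$ only gives $x^*(a)<a$, which you already have.
\item Strong Markov at $\tau^x_{a^*}$ together with $D(t)D(s)\le D(t+s)$ gives $\eta(x,a)\ge \E[D(\tau^x_{a^*})]\,\eta(a^*,a)=\eta(x,a^*)\,\eta(a^*,a)/a^*$ for $x<a^*$. This only reduces the claim to $\eta(x,a)<\eta(x,a^*)$ for some $x<a^*$, which is no easier; at $x=0$ it is even false for $a$ slightly above $a^*$.
\end{itemize}

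The missing idea is to freeze the starting point at $a^*$ and vary the barrier. Let $\Psi(a)=\eta(a^*,a)$. Differentiating $\eta(a,a)=a$ shows that your tangency condition is exactly $\Psi'(a^*)=1-h(a^*)=0$. Differentiating your formula in $a$ and substituting $y=a\sqrt{2\beta s}$ gives
\[
\Psi'(a)=\frac{1}{\beta a^2}\int_0^\infty (1-y\tanh y)\,y\,e^{-y^2/(2\beta a^2)}\,\frac{\cosh(cy)}{\cosh y}\,dy,\qquad c=\frac{a^*}{a},
\]
so only the weight depends on $a$. Take $w^*(y)=y\,e^{-y^2/(2\beta a^{*2})}$, the weight at $a=a^*$. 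The weight at $a$ has density $R(y)=\exp\bigl(\frac{(1-c^2)y^2}{2\beta a^{*2}}\bigr)\frac{\cosh(cy)}{\cosh y}$ relative to $w^*$. For $c<1$ and $y>0$,
\[
(\log R)'(y)=\frac{(1-c^2)y}{\beta a^{*2}}-\bigl(\tanh y-c\tanh(cy)\bigr)>0,
\]
because $\tanh y-c\tanh(cy)\le(1-c^2)y$ and $\beta a^{*2}<1$. Since $1-y\tanh y$ changes sign once, from $+$ to $-$ at some $y_0$, the single-crossing argument gives $\int_0^\infty(1-y\tanh y)R\,w^*\,dy<R(y_0)\int_0^\infty(1-y\tanh y)\,w^*\,dy=0$. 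Hence $\Psi$ strictly decreases on $[a^*,\infty)$, so $\eta(a^*,a)<a^*$ and $x^*(a)<a^*$.

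This argument uses $a^*<\beta^{-1/2}$ essentially, so that numerical step must be rigorous, and your plan for it fails. The integral $\int_0^\infty e^{-s}\sqrt{2s}\tanh\sqrt{2s}\,ds$ is about $1.07$, not $1.2$, and $\tanh y\ge y/(1+y)$ only yields about $0.73$. The bound $\tanh y\ge 1-2e^{-2y}$ does suffice: it gives $\sqrt{\pi/2}-2e^{2}\int_2^\infty (v-2)^2e^{-v^2/2}\,dv\approx 1.04>1$.
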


With $a^*$ in this lemma, we claim that $b$ in $\rho^{(\infty)}$ is exactly $a^*$. 
\begin{theorem}
$\rho^{(\infty)}=\tau_{a^*}$.
\end{theorem}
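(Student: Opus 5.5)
We already know $\rho^{(\infty)}=\tau_b$ with $b=\inf A_\infty$. By Proposition \ref{opt-rho-inf} and the maximality of $\rho^{(\infty)}$ among approachable times, $b$ is the largest number such that for every $x$ with $|x|<b$ the hitting time $\tau^x_b$ is optimal for $\max_{\tau\le\tau^x_b}J(\tau;x,b)$. So the plan is to show two things: $a^*$ has this property, and no $a>a^*$ does. Then $b=a^*$.

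\emph{Step 1: $\tau_{a^*}$ is approachable, hence $a^*\le b$.} Fix $t<\tau_{a^*}$ and write $x=|B_t|<a^*$. Approachability only requires that waiting until $\tau_{a^*}$ is at least as good as stopping now, i.e. $J(\tau_{a^*};t)\ge J(t;t)$. By the strong Markov property and time homogeneity, $J(\tau_{a^*};t)=\eta(x,a^*)$ and $J(t;t)=x$. Lemma \ref{Huang-NH}(2.b) with $a=a^*$ gives $\eta(x,a^*)>x$ for $x\in(0,a^*)$. For $x=0$, part (1) gives $\eta(0,a^*)>0$. Hence $F(\tau_{a^*})=\tau_{a^*}$. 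Since $\tau_a$ increases in $a$ and $\rho^{(\infty)}=\tau_*$ is the maximal approachable time (Theorem \ref{suff-soph} together with the previous theorem), we get $\tau_{a^*}\le\tau_b$, i.e. $a^*\le b$.

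\emph{Step 2: no $\tau_a$ with $a>a^*$ is approachable, hence $b\le a^*$.} Take $a>a^*$. Lemma \ref{Huang-NH}(2.a) gives $x^*(a)\in(0,a^*)$ with $\eta(x,a)<x$ for $x\in(x^*(a),a)$. Because $a^*<a$, the interval $(x^*(a),a)$ is nonempty. Brownian motion started at $0$ reaches a level $x$ in this interval before time $\tau_a$ with probability one, so the event $\{\exists\,t<\tau_a:\ |B_t|\in(x^*(a),a)\}$ has positive probability.

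At such a time $t$ we want $F(\tau_a)\le t$. That needs $J(t;t)>J(\tau;t)$ for \emph{every} stopping time $\tau\in(t,\tau_a]$, not just for $\tau=\tau_a$. This is the main obstacle, and I would handle it by taking $t$ to be the first time $|B|$ reaches $a^*$ (or any level above $x^*(a)$), and arguing as follows.
\begin{itemize}
\item By Proposition \ref{opt-rho-inf}, if $\tau_a$ were approachable it would be a maximal approachable candidate. The corresponding pre-committed optimum on $[t,\tau_a]$ from state $x$ would then be $\tau_a$ itself, with value $\eta(x,a)<x$. But immediate stopping already yields $x$, which is a contradiction.
\item More directly: if $\tau_a$ were approachable, then $\tau_a\le\rho^{(\infty)}=\tau_b$. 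By Proposition \ref{opt-rho-inf}, $\tau_b$ is optimal on $[t,\tau_b]$, so stopping at $t$ gives at most $J(\tau_b;t)$. This yields $\eta(x,b)\ge x$ for all $x<b$.
\end{itemize}
So it suffices to show that $b>a^*$ contradicts Lemma \ref{Huang-NH}(2.a). Indeed, for $x\in(x^*(b),b)$ we would have $\eta(x,b)<x$, against the inequality just derived. Hence $b\le a^*$.

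Combining the two steps gives $b=a^*$, and therefore $\rho^{(\infty)}=\tau_{a^*}$.
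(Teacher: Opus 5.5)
Your proof is correct and follows the paper's argument: Lemma~\ref{Huang-NH}(2.b), together with $\eta(0,a^*)>0$, makes $\tau_{a^*}$ approachable, which gives $a^*\le b$ by maximality of $\rho^{(\infty)}$; and if $b>a^*$, applying Proposition~\ref{opt-rho-inf} to $\rho^{(\infty)}=\tau_b$ at a time $t$ with $|B_t|\in(x^*(b),b)$ contradicts Lemma~\ref{Huang-NH}(2.a). Drop your first bullet, because Proposition~\ref{opt-rho-inf} is a statement about $\rho^{(\infty)}$ only and does not apply to an arbitrary approachable $\tau_a$; your second bullet, which is exactly the paper's reasoning, already does the work.
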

\begin{proof}
For any $t<\tau_{a^*}$, denote $x=B_t$, we have $|x|<a^*$, hence $J(t;t)=|x|<\eta(x,a^*)=J(\tau_{a^*};t)$, which means that $\tau_{a^*}$ is approachable. 

For any $a>a^*$, by (2.a) in Lemma \ref{Huang-NH}, we can uniquely find $x^*(a)\in (0, a^*)$ to solve $\eta(x,a)=x$. 
and for any $x\in (x^*(a), a)$, we have $\eta(x,a)<a$. For any $t<\tau_a$ with $x:=|B_t|>x^*(a)$, we have $x\in (x^*(a), a)$, 
and hence $\eta(x,a)<x$. 
Standing at time $t$, we have $J(t; t)=x, \eta(x,a)=J(\tau_a; t)$,  so $\rho^{(\infty)}=\tau_a$ is impossible given that $\rho^{(\infty)}$ satisfies Proposition  \ref{opt-rho-inf}. 

With the fact that $\rho^{(\infty)}$ must be in the form of $\tau_b$, we proved that $b=a^*$ and hence 
$$\rho^{(\infty)}=\tau_{a^*}.$$
\end{proof}

\section{Comparison between BIS and  Sophisticated Stopping Strategy}\label{sec5}
In a finite discrete time setting, the BIS is the mostly accepted equilibrium solution. Does our sophisticated stopping strategy respect
the BIS in a finite discrete time setting?  In this section, we will remain in a finite discrete time setting with 
${\mathbb T}=\{0,1,\cdots, T\}$ as in Section \ref{sec2}, and compare the two solutions.

In the definition of  BIS, at any time $t$, we  only compare the value of immediate stopping $J(t;t)$ and  that of the next  `optimal' stopping time $J(\tau^b_{t+1}; t)$; while in the definition of our sophisticated stopping strategy, we need to compare  $J(t;t)$ against $J(\rho;t)$ for any stopping time $\rho$ with $t<\rho\le \rho^{(n)}$. This difference tends to cause the BIS to stop earlier than the 
sophisticated strategy, and this is true as stated in the following theorem. 

 \begin{theorem}\label{bisvssophist}
The sophisticated strategy $\tau_{*}$ is no earlier than the BIS $\tau^b_0$, i.e., $\tau_*\ge \tau^b_0$. 
 \end{theorem}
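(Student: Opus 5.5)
Since $\tau_*$ is the maximal approachable time (the essential supremum of all approachable times), it suffices to show that $\tau^b_0$ is approachable, i.e. $F(\tau^b_0)=\tau^b_0$. Then $\tau^b_0\le\tau_*$ follows at once. Unwinding Definition \ref{stopping time mapping} in the finite discrete setting, approachability of $\tau^b_0$ means the following: for every $t$ with $t<\tau^b_0$ (on the event where this holds), there is no strict preference at time $t$ for immediate stopping over all stopping times $\tau$ with $t<\tau\le\tau^b_0$. In other words, it is \emph{not} the case that $J(t;t)>J(\tau;t)$ for all such $\tau$.

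The natural witness is $\tau=\tau^b_0$ itself. On $\{t<\tau^b_0\}$ we have $\tau^b_0>t$, so $\tau^b_0$ is an admissible competitor in the definition of $F(\tau^b_0)$ at time $t$. Theorem \ref{bisprop2} with $t=0$ and $s=t$ gives $J(\tau^b_0;t)\ge J(t;t)$. Hence the strict inequality $J(t;t)>J(\tau;t)$ fails for $\tau=\tau^b_0$, so $t$ does not belong to the set whose infimum defines $F(\tau^b_0)$. Since this holds for every $t<\tau^b_0$, the infimum is at least $\tau^b_0$, and therefore $F(\tau^b_0)=\tau^b_0\wedge(\cdot)=\tau^b_0$.

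The only delicate point is the random/pathwise bookkeeping. The comparison $J(\cdot;t)$ produces $\cF_t$-measurable random variables, $t$ ranges over the random set $\{t<\tau^b_0\}$, and the competitor $\tau^b_0$ need not satisfy $\tau^b_0>t$ everywhere. I would handle this by working on the $\cF_t$-event $A_t=\{t<\tau^b_0\}$. If needed, I would use the competitor $\tau^b_0\id_{A_t}+(t+1)\id_{A_t^c}\wedge T$, or simply read the inequality in Theorem \ref{bisprop2} as holding on $A_t$. On $A_t$ we have $\tau^b_t=\tau^b_0$: the proof of Theorem \ref{bisprop2} shows $\tau^b_s=\tau^b_t$ for $t\le s<\tau^b_t$, and in particular $\tau^b_t=\tau^b_0$ there. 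So the inequality $J(\tau^b_0;t)\ge J(t;t)$ holds on $A_t$, which is all that the pointwise definition of $F$ requires. I expect this measurability and event-localisation step to be the main obstacle, since the rest is a direct application of Theorem \ref{bisprop2} and the maximality of $\tau_*$.
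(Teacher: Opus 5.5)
Your proposal is correct and follows the paper's proof: Theorem \ref{bisprop2} with $t=0$ gives $J(\tau^b_0;s)\ge J(s;s)$ for every $s<\tau^b_0$, so $\tau^b_0$ is approachable, and the maximality of $\tau_*$ then gives $\tau^b_0\le\tau_*$. The paper does not address the event-localisation you flag; if you make it rigorous, note that going from $\tau^b_{t+1}=\tau^b_0$ on $\{t<\tau^b_0\}$ to $J(\tau^b_{t+1};t)=J(\tau^b_0;t)$ there tacitly uses the locality property $J(\tau_1\mathbb{I}_A+\tau_2\mathbb{I}_{A^c};t)=J(\tau_1;t)\mathbb{I}_A+J(\tau_2;t)\mathbb{I}_{A^c}$ from the paper's footnote, which the paper's proof of Theorem \ref{bisprop2} also relies on implicitly.
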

 \begin{proof}
 By Theorem \ref{bisprop2}, we know $\forall s<\tau^b_{0}$,
 \begin{equation}
     J(\tau^b_{0};s )\ge J(s;s). 
 \end{equation}
This implies that  $\tau^b_{0}$ is an approachable time. With the fact that $\tau_*$ is the biggest approachable time, 
we have  $\tau^b_{0}\le \tau_{*}$.
 \end{proof}

The inequality in Theorem \ref{bisvssophist} does not fully support the possibility that  $\tau^b_0<\tau_*$ is possible. We show this possibility by the following example.  

\begin{Ex}\label{bis-smaller}
(An example for $\tau^{b}_0< \tau^{*}$)
Take $T=5$. The preference $J(\cdot;t)$ is specified by 
\begin{itemize}
\item [(i)] $J(\tau; 5)=J(\tau;4)=\E[5-\tau]$;
\item [(ii)] $J(\tau;t)=\E[\tau]$ for $t=1,2,3$;
\item [(iii)] $J(\tau;0)=5-\E[|\tau-1|]$. 
\end{itemize}
It is easy to check that $\tau^b_5=5; \tau^b_4=4; \tau^b_3=\tau^b_2=\tau^b_1=4; \tau^b_0=0$;
while $\rho^{(1)}=4, \rho^{(2)}=\rho^{(1)}=4$, hence $\tau_*=\rho^{(\infty)}=\rho^{(1)}=4$. 
 \end{Ex}
 
 \begin{re}
 Example \ref{bis-smaller} also shows that $\tau^b_0$ can be not the maximal approachable time. 
 \end{re}

With the difference shown in Theorem \ref{bisvssophist} and Example \ref{bis-smaller}, a natural question is that 
which solution sounds more reasonable?  We cannot simply answer this question without a criterion for the comparison,
but in the rest of this section, we show that  the sophisticated solution has the following good property, while the BIS does not. 

\begin{MyDef} \label{monotone}
Given a flow of preference $J(\cdot; s)$ over $s\in \{0,1,2,\cdots, T\}$, denote $\tau_{0, t}$ as the solution, in some sense, for the problem over the time interval $s\in \{0,1,2,\cdots, t\}$ under some definition of a solution. 
We say the solution $\{\tau_{0,t}: t\in \{1,2,\cdots,T\}\} $ is forward monotonic if $\tau_{0,t}\le \tau_{0, t+1}$ for any $t\in \{1,\cdots,T-1\}$.
\end{MyDef}

\begin{re}
   The Definition \ref{monotone} of forward monotonicity can be easily extended to any time setting.  
\end{re}

\begin{re}
The forward monotonicity sounds reasonable, which says that, 
if the time horizon of the stopping problem is shortened from the end, then the solution should stop earlier. 
When preferences are time consistent, the (maximal) optimal solution is clearly forward monotonic. While it is much more complicated when  the preferences are not time consistent.
\end{re}

For the sophisticated solution, we have the monotonicity in Definition \ref{monotone}.
\begin{theorem}
The sophisticated time in Definition \ref{equ-solution}  is monotone. 
\end{theorem}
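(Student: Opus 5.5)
Write $F_t$ for the mapping of Definition \ref{stopping time mapping} for the problem on the horizon $\{0,1,\cdots,t\}$, and $F_{t+1}$ for the horizon $\{0,\cdots,t+1\}$. Write $\tau_{0,t}$ for the corresponding sophisticated time. We must show $\tau_{0,t}\le\tau_{0,t+1}$.

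The plan is to use the forward characterization: by Theorem \ref{max=min}, $\tau_{0,t+1}$ is the maximal approachable time for the longer horizon. So it suffices to show that $\tau_{0,t}$, which is approachable for the short horizon, is also approachable for the long horizon, i.e. $F_{t+1}(\tau_{0,t})=\tau_{0,t}$.

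The key observation is that for any stopping time $\rho\le t$ (a stopping time in $\ST(0)$ for the short problem is also one for the long problem), $F_t(\rho)=F_{t+1}(\rho)$. Indeed, $F(\rho)$ is defined as
$$\inf\{s<\rho: J(s;s)>J(\tau;s),\ \forall\, \rho\ge\tau>s\}\wedge\rho.$$
This involves only times $s<\rho\le t$ and stopping times $\tau\le\rho$. The preferences $J(\cdot;s)$ for $s\le t$ are the same in both problems, since the flow is given and only the horizon is truncated. The filtration also agrees up to $t$. Hence the two infima coincide pathwise.

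Applying this with $\rho=\tau_{0,t}\le t$ gives $F_{t+1}(\tau_{0,t})=F_t(\tau_{0,t})=\tau_{0,t}$. So $\tau_{0,t}$ is approachable in the $(t+1)$-horizon problem, and the maximality of $\tau_{0,t+1}$ among approachable times yields $\tau_{0,t}\le\tau_{0,t+1}$ a.s.

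The only delicate point is a bookkeeping issue: the set of admissible stopping times $\ST(s)$ for the truncated problem must be interpreted consistently. One should check that a stopping time $\tau$ with $s<\tau\le\rho$ is a legitimate competitor in both problems, which holds because $\rho\le t$. With that checked, the argument is routine. It extends verbatim to general time sets, consistent with the preceding remark.
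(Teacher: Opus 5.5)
Your proof is correct and follows the paper's argument: $\tau_{0,t}$ is approachable for the horizon $\{0,\dots,t\}$, it remains approachable for $\{0,\dots,t+1\}$, and the maximality of $\tau_{0,t+1}$ among approachable times then gives $\tau_{0,t}\le\tau_{0,t+1}$. The one addition is that you justify the middle step explicitly through $F_t(\rho)=F_{t+1}(\rho)$ for $\rho\le t$, which the paper's proof leaves implicit.
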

\begin{proof}
Denote by $\tau_*(t)$ the sophisticated solution over the time setting $s\in \{0,1,\cdots, t\}$ for any $t\le T$. 
Then $\tau_*(t)$  is the maximal approachable time over $s\in \{0,1,\cdots, t\}$, and is also approachable over the time set 
$s\in \{0,1,\cdots, t+1\}$, hence $\tau_*(t)\le \tau_*(t+1)$. 
\end{proof}
\begin{re}
    The proof above can be extended naturally for general time setting. 
\end{re}

While for the BIS, we don't have the monotonicity, which can be seen by the following  counter example. 

\begin{Ex}
Take $T=5$, and the preference $J$ are given by 
\begin{itemize}
\item [(i)] $J(\tau;5)=J(\tau;4)=J(\tau;3)=\E[\tau]$;
\item [(ii)] $J(\tau;2)=\E[|\tau-3.2|]$;
\item [(iii)] $J(\tau;1)=5-\E[|\tau-2|]$;
\item [(iv)] $J(\tau;0)=\E[\tau]$.
\end{itemize}
Denote $\tau^b(t)$ as the BIS for the problem over the time set $\{0,1,\cdots, t\}$. 
It is easy to check that $\tau^b(5)=1$, while $\tau^b(4)=2$, which denies the monotonicity of the BIS. 

As a comparison, denote $\tau_*(t)$ as the sophisticated solution over the time set $\{0,1,\cdots, t\}$. 
Then $\tau_*(5)=5$ and $\tau_*(4)=2$, which supports the monotonicity of the sophisticated solution. 
\end{Ex}

\section{Concluding Remark}\label{sec8}

Dynamic optimal stopping problem without time consistency is hard, not only because of the loss of dynamic principle, but also because of the lack of widely accepted definition of a solution for general settings. In this paper, we propose a sophisticated solution for a general optimal stopping problem without time consistency, where the time setting can be continuous,  discrete, or hybrid, and the preferences can be in any form. Our definition respects the game-theoretical idea for dealing with dynamic optimisation without time consistency, and it is defined both forwardly and backwardly, which meet up at our final solution. The forward definition is 
hard to analyse without detailed structure of the problem, while the backward definition can be approached by a sequence of stopping times under some sufficient condition. The sufficient condition  does not hold in general, which is disproved by a counter example. While for the widely studied non-exponential discounting problem, we prove that the sufficient condition does hold. 

For the uniqueness of our definition of sophisticated solution, we find that, given the principle that ``stop later if  the best  future stopping shares the same objective value as immediate stopping'', our solution is unique, which corresponds to the maximal optimal stopping time in the time consistent case. There is also a (unique) minimal version of the sophisticated solution obtained with the principle that 
``stop immediately  if it achieves the maximal objective value''.

\bibliography{refs}{}
\bibliographystyle{spmpsci}

\end{document}